\newtheorem{theorem}{Theorem}[section]
\newtheorem{lemma}[theorem]{Lemma}
\newtheorem{problem}[theorem]{Problem}
\newtheorem{definition}[theorem]{Definition}
\newtheorem{remark}[theorem]{Remark}
\newcommand{\R}{{\mathbb{R}}}
\newcommand{\N}{{\mathbb{N}}}
\newcommand{\ie}{{\it i.e.}}
\newcommand{\Post}{\mathrm{Post}}
\begin{document}
\title[Symbolic Approximate Time-Optimal Control]{Symbolic Approximate Time-Optimal Control}
\author{Manuel Mazo Jr and Paulo Tabuada}
\thanks{This work has been partially
supported by the National Science Foundation CAREER award 0717188.}
\thanks{M. Mazo Jr is with INCAS$^3$, Assen and the Department of Discrete Technology and Production Automation, University of Groningen, The Netherlands, 
{\tt\small m.mazo@rug.nl}\\
P. Tabuada is with the Department of Electrical Engineering, University of California, Los Angeles, CA 90095-1594,{\tt\small tabuada@ee.ucla.edu}%
}

\maketitle

%\journal{Systems and Control Letters}

%\begin{document}

%\begin{frontmatter}

%% Title, authors and addresses

%% use the tnoteref command within \title for footnotes;
%% use the tnotetext command for the associated footnote;
%% use the fnref command within \author or \address for footnotes;
%% use the fntext command for the associated footnote;
%% use the corref command within \author for corresponding author footnotes;
%% use the cortext command for the associated footnote;
%% use the ead command for the email address,
%% and the form \ead[url] for the home page:
%%
%% \title{Title\tnoteref{label1}}
%% \tnotetext[label1]{}
%% \author{Name\corref{cor1}\fnref{label2}}
%% \ead{email address}
%% \ead[url]{home page}
%% \fntext[label2]{}
%% \cortext[cor1]{}
%% \address{Address\fnref{label3}}
%% \fntext[label3]{}

%\title{Symbolic Approximate Time-Optimal Control\tnoteref{t1}}
%\tnotetext[t1]{This work has been partially
%supported by the National Science Foundation CAREER award 0717188.}

%% use optional labels to link authors explicitly to addresses:
%% \author[label1,label2]{<author name>}
%% \address[label1]{<address>}
%% \address[label2]{<address>}

% \author[rug]{Manuel Mazo Jr. \corref{cor1}}
% \ead{m.mazo@rug.nl}
% \author[ucla]{Paulo Tabuada}
% \ead{tabuada@ee.ucla.edu}
% 
% \cortext[cor1]{Corresponding author}
% \address[rug]{INCAS$^3$, Assen, and Department of Discrete Technology and Production Automation, University of Groningen, The Netherlands}
% \address[ucla]{Department of Electrical Engineering, University of California, Los Angeles, CA, U.S.A.}

\begin{abstract}
There is an increasing demand for controller design techniques capable of
addressing the complex requirements of todays embedded applications.
This demand has sparked the interest in symbolic control where lower
complexity models of control systems are used to cater for complex
specifications given by temporal logics, regular languages, or automata. These
specification mechanisms can be regarded as qualitative since they divide the
trajectories of the plant into bad trajectories (those that need to be
avoided) and good trajectories. However, many applications
require also the optimization of quantitative measures of the trajectories
retained by the controller, as specified by a cost or utility function. As a first step towards the synthesis of controllers reconciling both
qualitative and quantitative specifications, we investigate in this paper
the use of symbolic models for time-optimal controller synthesis. We
consider systems related by approximate (alternating) simulation relations and
show how such relations enable the transfer of time-optimality information
between the systems. We then use this insight to synthesize approximately
time-optimal controllers for a control system by working with a lower
complexity symbolic model. The resulting approximately time-optimal
controllers are equipped with upper and lower bounds for the time to reach a
target, describing the quality of the controller. 
The results described in
this paper were implemented in the Matlab Toolbox
\textsf{Pessoa}~\cite{pessoaURL} which we used to workout several illustrative
examples reported in this paper.
\end{abstract}

%\begin{keyword}
%% keywords here, in the form: keyword \sep keyword
%symbolic control; approximate simulation relations; approximate time-optimal control
%% MSC codes here, in the form: \MSC code \sep code
%% or \MSC[2008] code \sep code (2000 is the default)
%\end{keyword}

%\end{frontmatter}

\section{Introduction}
\label{sec:intro}

Symbolic abstractions are simpler descriptions of control systems, typically
with finitely many states, in which each symbolic state represents a collection or
aggregate of states in the control system. The power of abstractions has
been exploited in the computer science community over the years, and only
recently started to gather the attention of the control systems community.
In the present paper we analyze the suitability of symbolic
abstractions of control systems to synthesize controllers enforcing both
qualitative and quantitative specifications. 

Qualitative specifications require the
controller to preclude certain undesired trajectories from the system to be controlled.
The term qualitative refers to the fact that all the desired trajectories are
treated as being equally good. Examples of qualitative specifications
include requirements given by means of temporal-logics, $\omega$-regular
languages, or automata on infinite strings. These specifications are hard (if
not impossible) to address with classical control design theories. In
practice, most solutions to such problems are obtained through hierarchical
designs with supervisory controllers on the top layers. Such designs are
usually the result of an ad-hoc process for which correctness
guarantees are hard to obtain. Moreover, these kinds of designs require a
certain level of insight that just the most experienced system designers
posses. Recent work in symbolic control~\cite{girard09, pola08, symbolicTAC06}
has emerged as an alternative to ad-hoc designs.

In many practical applications, while there are plant
trajectories that must be eliminated, there is also
a need to select the best of the remaining trajectories. Typically, the best
trajectory is specified by means of a cost or utility associated to each
trajectory. The control design problem then requires the removal of the
undesirable trajectories and the selection of the minimum cost or maximum
utility trajectory.
As a first step towards our objective of
synthesizing controllers enforcing qualitative and quantitative
objectives, we consider in the present paper the synthesis of time-optimal
controllers for reachability specifications.
A problem of this kind, widely studied in the robotics literature, is that of optimal kinodynamic motion planning. Such problem is known to easily become computationally hard~\cite{canny_complexity_1988}. We discuss in Section~\ref{ssec:complexity} where the complexity of solving this kind of problems resides when following our methods.

Since the illustrious seminal contributions in the 50's by
Pontryagin~\cite{pontryagin59} and
Bellman~\cite{bellman52}, the design of optimal controllers has remained a
standing quest of the controls community. Despite the several advances since
then, solving optimal control problems with complex
geometries on the state space, constraints in the input space, and/or complex
dynamics is still a daunting task. This has motivated the development of
numerical techniques to solve complex optimization problems. A common method
in the literature is to discretize the dynamics and apply optimal search
algorithms on graphs such as Dijkstra's
algorithm~\cite{grune09, broucke05}. The philosophy behind such work is to show that by using finer discretizations, one obtains controllers that are arbitrarily close to the optimal controller. In contrast, our objective is not to approach the optimal solution asymptotically, but rather to \emph{effectively} compute an approximate solution and to establish how much it deviates from the optimal one.
%\r{In particular, the authors of \cite{broucke05} construct
%an abstraction bisimilar to the original system annotated with extra data; then discretize the value function and, for this particular construction, the authors show that as the discretization steps employed tend to zero, the solution provided converges to the viscosity solution of the associated Hamilton-Jacobi-Bellman equation.}
Other techniques to solve complex optimal control problems include Mixed
(Linear or Quadratic) Integer Programing~\cite{karaman08} and
SAT-solvers~\cite{bemporad06}.

The approach we follow in the present paper is complementary to the
aforementioned techniques and our contribution is twofold:

\begin{itemize}%\addtolength{\itemsep}{-0.5\baselineskip}

\item At the theoretical level, we show that time-optimality information
can be transferred from a system $S_a$ to a system $S_b$ when system $S_a$ is
related to system $S_b$ by an approximate (alternating) simulation relation.
Hence, we decouple the analysis of optimality considerations from the design of algorithms extracting a discretization $S_a$ from the original system $S_b$.
Using this result, we  show how to construct an approximately time-optimal
controller for system $S_b$ from a time-optimal controller for system $S_a$.
Moreover, we also provide bounds on how much the cost or utility of the
approximately time-optimal controller deviates from the true cost or utility.
These bounds are often conservative due to the, in general, non-deterministic
nature of the abstractions used. However, these bounds 
can still be useful in practice as performance guarantees for the obtained solutions.

\item At the practical level, we illustrate the practicality of our results by implementing them in %have implemented the proposed results in
the freely available Matlab toolbox
\textsf{Pessoa}~\cite{pessoaCAV,pessoaURL}. We report on several examples
conducted in \textsf{Pessoa} to illustrate the feasibility of the proposed approach.
\end{itemize}

%The practical contribution relies on very recent results guaranteeing the
%possibility of constructing symbolic models of control systems without relying
%on stability assumptions, as was the case in previous
%work~\cite{girard09},\cite{pola08}. A thorough discussion of these recent
%results can be found in~\cite{zamani10journal}.

The proposed results are independent of the specific
techniques employed in the construction of symbolic abstractions provided that
the existence of approximately (alternating) simulations relations is
established. The specific constructions reported
in~\cite{tabuada-09, zamani10journal}
show that our assumptions can be met for a large class of
systems, thus making the use of the proposed methods
widely applicable. Furthermore, effective algorithms and data structures from
computer science can be used to implement the proposed techniques, see for
example the recent work on optimal synthesis~\cite{bloem09}. In particular,
the examples presented in the current paper, performed in the Matlab
toolbox \textsf{Pessoa}, were implemented using Binary Decision Diagrams
(BDD's)~\cite{Wegener00branchingprograms} to store systems modeling both
plants and controllers. The fact that BDD's can be used to automatically
generate hardware~\cite{bloem07} or software~\cite{sangiovanni99}
implementations of the controllers makes them specially attractive.

The paper is organized as follows: in Section~\ref{sec:prelim} we review the notions of systems and  relationships between systems. Section~\ref{sec:time_optimal_control} formalizes the optimal control problem studied in this paper, and establishes relationships between the attainable costs for two systems related by (alternating) simulation relationships.  Section~\ref{sec:solution} provides an algorithm to solve time-optimal control problems approximately by relying on symbolic abstractions.
For the convenience of the readers wishing to solve concrete time-optimal
problems, we provide a concise description of all the necessary steps in Section~\ref{SSec:Practice}. Some illustrative examples are presented in Section~\ref{sec:bdd_and_example} and Section~\ref{sec:discussion} concludes the paper with a brief discussion.

\section{Preliminaries}
\label{sec:prelim}

\subsection{Notation}
\label{subsec:notation}

Let us start by introducing some notation that will be used throughout the
present paper. We denote by $\N$ the natural numbers including zero and by
$\N^+$ the strictly positive natural numbers. With $\R^+$ we denote the strictly
positive real numbers, and with $\R^+_0$ the positive real numbers including
zero. 
%By $\B$ we denote the Boolean numbers and $\mathbf{b}_{n}(x)$ the binary
%representation of $x$ using $n$ bits. 
The identity map on a set $A$ is denoted
by $1_{A}$. If $A$ is a subset of $B$ we denote by
\mbox{$\imath_{A}:A\hookrightarrow B$} or simply by $\imath $ the natural inclusion
map taking any $a \in A$ to \mbox{$\imath (a) = a \in B$}. 
The
closed ball centered at $x\in{\mathbb{R}}^{n}$ with radius $\varepsilon$ is defined by
\mbox{$\mathbf{B}_{\varepsilon}(x)=\{y\in{\mathbb{R}}^{n}\,|\,\Vert
x-y\Vert\leq\varepsilon\}$}. We denote by $\mathbf{int}(A)$ the interior of a
set $A$. A \emph{normed vector space} $V$ is a vector space equipped with a norm $\Vert\cdot\Vert$, as is well-known this induces the metric $\mathbf{d}(x,y)=\Vert x-y\Vert,\,\, x,y\in V$.
Given a vector
\mbox{$x\in\mathbb{R}^{n}$} we denote by $x_{i}$ the $i$--th element of $x$ and by
$\Vert x\Vert$ the infinity norm of $x$; we recall that \mbox{$\Vert
x\Vert=max\{|x_1|,|x_2|,...,|x_n|\}$}, where $|x_i|$ denotes the absolute
value of $x_i$. 
%Given an
%essentially bounded function $\delta:\R^+_0\to\R^m$ we denote by $\Vert \delta
%\Vert_\infty$ its $\mathcal{L}_\infty$ norm,
% \ie, $\Vert \delta
%\Vert_\infty=(ess)\sup_{t\in\R^+_0}\lbrace{|\delta(t)|\rbrace}<\infty$. 
%For any $A\subseteq \mathbb{R}^{n}$ and
%\mbox{$\mu\in{\mathbb{R}}$} we define the set $[A]_{\mu}=\{a\in A\,\,|\,\,a_{i}
%=k_{i}\mu,k_{i}\in\mathbb{Z},i=1,...,n\}$. The set $[A]_{\mu}$ will be used as
%an approximation of the set A with precision $\mu$. Geometrically, for any
%$\mu\in{\mathbb{R}^+}$ and $\lambda\geq\mu/2$ the collection of sets
%$\{\mathbf{B}_{\lambda}(q)\}_{q\in[\mathbb{R}^n]_{\mu}}$ is a covering of
%$\mathbb{R}^n$.
%Abusing notation, we also denote for a set $A$ its cardinality by $|A|$.
%A continuous function $\gamma:\mathbb{R}_{0}^{+}%
%\rightarrow\mathbb{R}_{0}^{+}$, is said to belong to class $\mathcal{K}$ if
%it is strictly increasing and \mbox{$\gamma(0)=0$}; $\gamma$ is said to belong
%to class $\mathcal{K}_{\infty}$ if \mbox{$\gamma\in\mathcal{K}$} and
%$\gamma(r)\rightarrow \infty$ as $r\rightarrow\infty$. 
We identify a relation $R\subseteq A\times B$ with the map
\mbox{$R:A\rightarrow2^{B}$} defined by \mbox{$b\in R(a)$} iff \mbox{$(a,b)\in R$}. For a set $S\in A$ the set $R(S)$ is defined as \mbox{$R(S)=\lbrace b\in B\, :\, \exists\, a\in S\,, (a,b)\in R \rbrace$}.
Also, $R^{-1}$ denotes the inverse relation
defined by \mbox{$R^{-1}=\{(b,a)\in B\times A:(a,b)\in R\}$}.
We also denote by $\mathbf{d}:X\times X\to\R^+_0$ a metric in the space $X$
and by\linebreak
\mbox{$\pi_X:X_a\times X_b \times U_a\times U_b\to X_a\times X_b$} the
projection sending\linebreak
\mbox{$(x_a,x_b,u_a,u_b)\in X_a\times X_b \times U_a\times U_b$} to
$(x_a,x_b)\in X_a\times X_b$.

\subsection{Systems}
\label{subsec:systems}

In the present paper we use the mathematical notion of  \emph{systems} to
model dynamical phenomena. This notion is formalized in the following definition:

\begin{definition}[System~\cite{tabuada-09}]
A system $S$ is a sextuple %\linebreak
$(X,X_0,U,\rTo,Y,H)$ 
consisting of:

\begin{itemize}%\addtolength{\itemsep}{-0.5\baselineskip}

\item a set of states $X$;
\item a set of initial states $X_0\subseteq X$
\item a set of inputs $U$;
\item a transition relation $\rTo\subseteq X\times U\times X$;
\item a set of outputs $Y$;
\item an output map $H:X\rightarrow Y$.
\end{itemize}
A system is said to be:

\begin{itemize}%\addtolength{\itemsep}{-0.5\baselineskip}

\item \textit{metric}, if the output set $Y$ is equipped with a metric
$\mathbf{d}:Y\times Y\rightarrow\mathbb{R}_{0}^{+}$;
\item \textit{countable}, if $X$ is a countable set;
\item \textit{finite}, if $X$ is a finite set.
\end{itemize}
\end{definition}

We use the notation $x\rTo^uy$ to denote \mbox{$(x,u,y)\in\rTo$}.
For a transition $x\rTo^uy$, state $y$ is called a $u$-successor,
or simply successor. We denote the set of \mbox{$u$-successors} of a state $x$
by $\Post_u(x)$. If for all states $x$ and inputs $u$ the sets
$\Post_u(x)$ are singletons (or empty sets) we say the system $S$
is \emph{deterministic}. If, on the other hand, for some state $x$ and input $u$
the set $\Post_u(x)$ has cardinality greater than one, we say that system
$S$ is \emph{non-deterministic}. Furthermore, if there exists some pair $(x,u)$
such that $\Post_u(x)=\emptyset$ we say the system is
\emph{blocking}, and otherwise \emph{non-blocking}.
We also use the notation $U(x)$ to denote the set 
\mbox{$U(x)=\lbrace u\in U |\Post_u(x)\neq\emptyset\rbrace$}.

Nondeterminism arises for a variety of reasons such as modeling simplicity.
Nevertheless, to every nondeterministic system $S_a$ we can associate a
deterministic system $S_{d(a)}$ by extending the set of
inputs:
\begin{definition}[Associated deterministic system]
\label{def:Staueta_det}
The deterministic system $S_{d(a)}=(X_a,X_{a0},U_{d(a)},\rTo_{d(a)},Y_a,H_a)$ associated with a given
system \linebreak
\mbox{$S_a=(X_a,X_{a0},U_{a},\rTo_{a},Y_a,H_a)$}, is defined by:
\begin{itemize}%\addtolength{\itemsep}{-0.5\baselineskip}

  \item $U_{d(a)}=U_a\times X_a$;
  \item $x\rTo^{(u,x')}_{d(a)}x'$ if there exists $x\rTo_{a}^{u}x'$ in $S_a$.
\end{itemize}
\end{definition}

Sometimes we need to refer to the possible sequences of
outputs that a system can exhibit. We call these sequences of
outputs \emph{behaviors}. Formally, behaviors are defined as follows:

\begin{definition}[Behaviors~\cite{tabuada-09}]
For a system $S$ and given any state $x\in X$, a \emph{finite
behavior} generated from $x$ is a finite sequence of transitions:
$$
y_0\rTo y_1\rTo y_2\rTo\ldots\rTo y_{n-1}\rTo y_{n}
$$
such that $y_0=H(x)$ and there exists a sequence of states $\lbrace
x_i\rbrace$, and a sequence of inputs $\lbrace u_i\rbrace$ satisfying:
$H(x_i)=y_i$ and $x_{i-1}\rTo^{u_{i-1}}x_{i}$ for all $0\leq i< n$.

An \emph{infinite behavior} generated from $x$ is an infinite sequence
of transitions: 
$$
y_0\rTo y_1\rTo y_2\rTo y_3\rTo\ldots
$$
such that $y_0=H(x)$ and there exists a sequence of states $\lbrace
x_i\rbrace$, and a sequence of inputs $\lbrace u_i\rbrace$ satisfying:
$H(x_i)=y_i$ and $x_{i-1}\rTo^{u_{i-1}}x_{i}$ for all $i\in\N$.
\end{definition}

By $\mathcal{B}_x(S)$ and $\mathcal{B}^{\omega}_x(S)$ we denote the
set of finite and infinite external behaviors generated from
$x$, respectively. Sometimes we use the notation \linebreak
\mbox{$\mathbf{y}=y_0y_1y_2\ldots y_n$}, to denote external behaviors, and $\mathbf{y}(k)$
to denote the $k$-th output of the behavior,\ie, $y_k$. A behavior $\mathbf{y}$
is said to be \emph{maximal} if there is no other behavior containing
$\mathbf{y}$ as a prefix.

Our objective is to design time-optimal controllers for control systems,
which are formalized in the following definition:

\begin{definition}[Continuous-time control system]
\label{def:ControlSys}A \textit{continuous-time control system} is a triple
$\Sigma=(\mathbb{R}^{n},\mathcal{U},f)$ consisting of:

\begin{itemize}%\addtolength{\itemsep}{-0.5\baselineskip}

\item the state set $\mathbb{R}^{n}$;
\item a set of input curves $\mathcal{U}$ whose elements are essentially
bounded piece-wise continuous functions of time from intervals of the form
$]a,b[\subseteq\mathbb{R}$ to \mbox{$\mathsf{U}\subseteq \R^m$} with \mbox{$a<0<b$};
\item a smooth map $f:\mathbb{R}^{n}\times \mathsf{U}\rightarrow\mathbb{R}^{n}$.
\end{itemize}
A piecewise continuously differentiable curve
\mbox{$\xi:]a,b[\rightarrow\mathbb{R}^{n}$} is said to be a
\textit{trajectory} or \textit{solution} of $\Sigma$ if there exists
$\upsilon\in\mathcal{U}$ satisfying:
\[
\dot{\xi}(t)=f(\xi(t),\upsilon(t)),
\]
for almost all $t\in$ $]a,b[$.
% \r{\s{The control system $\Sigma$ is said to be
%\textit{forward complete} if every trajectory is defined on an interval of the
%form $]a,\infty[$.} ***Not needed, I think.***}
\end{definition}

Although we have defined trajectories over open domains, we shall refer to
trajectories $\xi:[0,\tau]\rightarrow\mathbb{R}^{n}$ defined on closed
domains $[0,\tau],$ $\tau\in\mathbb{R}^{+}$ with the understanding of the
existence of a trajectory $\xi^{\prime}:]a,b[\rightarrow\mathbb{R}^{n}$
such that $\xi=\xi^{\prime}|_{[0,\tau]}$. We also write
$\xi_{x\upsilon}(t)$ to denote the point reached at time $t\in[0,\tau]$
under the input $\upsilon$ from initial condition $x$; this point is
uniquely determined, since the assumptions on $f$ ensure existence and
uniqueness of trajectories. 

\subsection{Systems relations}
\label{subsec:relations}

The results we prove build upon certain
simulation relations that can be established between systems. The
first relation explains how a system can simulate another system.

\begin{definition}[Approximate Simulation
Relation~\cite{tabuada-09}]\index{simulation!approximate}
\label{def:ApproxSimulationRelation}
Consider two metric systems $S_a$ and $S_b$ with $Y_a=Y_b$, and let
$\varepsilon\in\R_0^+$. A relation \mbox{$R\subseteq X_a\times X_b$} is an
\emph{$\varepsilon$-approximate simulation relation} from $S_a$ to $S_b$ if
the following three conditions are satisfied:
\begin{enumerate}
\item for every $x_{a0} \in X_{a0}$, there exists $x_{b0} \in X_{b0}$ with
$(x_{a0},x_{b0}) \in R$;
\item for every $(x_a,x_b)\in R$ we have
\mbox{$\mathbf{d}(H_a(x_a),H_b(x_b))\le\varepsilon$};
\item for every $(x_a,x_b)\in R$ we have that $x_a\rTo^{u_a}_a x_a'$
in $S_a$ implies the existence of $x_b\rTo_b^{u_b} x_b'$ in $S_b$ satisfying
$(x_a',x_b')\in R$.
\end{enumerate}
We say that $S_a$ is $\varepsilon$-approximately simulated by $S_b$ or that
$S_b$ $\varepsilon$-approximately simulates $S_a$, denoted by $S_a
\preceq_{\mathcal{S}}^\varepsilon S_b$, if there exists an
$\varepsilon$-approximate simulation relation from $S_a$ to $S_b$.
\end{definition}

When $S_a\preceq_{\mathcal{S}}^\varepsilon S_b$, system $S_b$ can replicate
the behavior of system $S_a$ by starting at a state $x_{b0}\in X_{b0}$ related
to any initial state $x_{a0}\in X_{a0}$ and by replicating every transition in
$S_a$ with a transition in $S_b$ according to (3). It then follows from (2)
that the resulting behaviors will be the same up to an error of $\varepsilon$.
If $\varepsilon=0$ the second condition implies that two states $x_a$ and
$x_b$ are related whenever their outputs are equal, \ie, $(x_a,x_b)\in R$ implies
$H(x_a)=H(x_b)$, and we say that the relation is an \emph{exact} simulation
relation. When nondeterminisn is regarded as adversarial, the notion of
approximate simulation can be modified by explicitly accounting for
nondeterminisn.

\begin{definition}[Approximate alternating simulation
relation~\cite{tabuada-09}]\index{simulation!approximate alternating}
\label{def:ApproxAlternatingSimulation}
Let $S_a$ and $S_b$ be metric systems with $Y_a=Y_b$ and let
$\varepsilon\in\R_0^+$. A relation \mbox{$R\subseteq X_a\times X_b$} is an
\emph{$\varepsilon$-approximate alternating simulation relation} from $S_a$ to
$S_b$ if the following three conditions are satisfied:
\begin{enumerate}
\item for every $x_{a0}\in X_{a0}$ there exists $x_{b0}\in X_{b0}$ with
$(x_{a0},x_{b0})\in R$;
%\item for every $x_{b0}\in X_{b0}$ there exists $x_{a0}\in X_{a0}$ with $(x_{a0},x_{b0})\in R$;
%\label{Req}
\item for every $(x_a,x_b)\in R$ we have
\mbox{$\mathbf{d}(H_a(x_a),H_b(x_b))\le\varepsilon$};
\item for every $(x_a,x_b)\in R$ and for every \mbox{$u_a\in U_a(x_a)$} there
exists \mbox{$u_b\in U_b(x_b)$} such that for every \mbox{$x_b'\in \Post_{u_b}(x_b)$} there
exists $x_a'\in\Post_{u_a}(x_a)$ satisfying $(x_a',x_b')\in R$.
\end{enumerate}
We say that $S_a$ is $\varepsilon$-approximately alternatingly simulated by
$S_b$ or that $S_b$ \mbox{$\varepsilon$-approximately} alternatingly simulates $S_a$,
denoted by $S_a \preceq_{\mathcal{AS}}^\varepsilon S_b$, if there exists an
$\varepsilon$-approximate alternating simulation relation from $S_a$ to $S_b$.
\end{definition}

Note that for deterministic systems the notion of alternating
simulation degenerates into that of simulation. In general, the notions of
simulation and alternating simulation are incomparable as illustrated by Example 4.21
in~\cite{tabuada-09}. Also note that for any system
$S_a$, its deterministic counterpart $S_{d(a)}$ satisfies
$S_a\preceq^0_{\mathcal{AS}}S_{d(a)}$. As in the case of exact simulation
relations, we say a \mbox{0-approximate} alternating simulation relation is an
\emph{exact alternating simulation relation}.

\subsection{Composition of systems}
The feedback composition of a controller $S_c$ with a plant $S_a$ describes the
concurrent evolution of these two systems subject to synchronization
constraints. In this paper we use the notion of extended alternating simulation relation to describe these constraints. The following formal definition is only used in the proof of Lemma~\ref{lemma:costs_rel_abstract}. The readers not interested in the proof can simply replace the symbol $S_c\times^{\varepsilon}_\mathcal{F}S_a$, defined below, with ``controller $S_c$ acting on the plant $S_a$''. 

\begin{definition}[Extended alternating simulation relation~\cite{tabuada-09}]
Let $R$ be an alternating simulation relation from system $S_a$ to system
$S_b$. The extended alternating simulation relation $R^e\subseteq X_a\times X_b
\times U_a \times U_b$ associated with $R$ is defined by all the quadruples
$(x_a,x_b,u_a,u_b)\in X_a\times X_b
\times U_a \times U_b $ for which the following three conditions hold:
\begin{enumerate}
  \item $(x_a,x_b)\in R$;
  \item $u_a\in U_a(x_a)$;
  \item $u_b\in U_b(x_b)$ and for every $x_b^\prime\in\Post_{u_b}(x_b)$ there
  exists $x_a^\prime\in\Post_{u_a}(x_a)$ satisfying $(x_a^\prime,x_b^\prime)\in R$.
\end{enumerate}
\end{definition}

The interested reader is referred to~\cite{tabuada-09} for a detailed
explanation on how the following notion of feedback composition guarantees
that the behavior of the plant is restricted by controlling only its inputs.

\begin{definition}[Approximate feedback composition~\cite{tabuada-09}]
Let $S_c$ and $S_a$ 
% \mbox{$S_c=(X_c,X_{c0},U_c,\rTo_c,Y_c,H_c)$} and
% $S_a=(X_a,X_{a0},U_a,\rTo_a,Y_a,H_a)$ 
be two metric systems with the same output sets $Y_c=Y_a$, normed vector
spaces, and let $R$ by an $\varepsilon$-approximate alternating simulation
relation from $S_c$ to $S_a$. The feedback composition of $S_c$ and $S_a$ with
interconnection relation $\mathcal{F}=R^e$, denoted by
$S_c\times^{\varepsilon}_\mathcal{F}S_a$, is the system
$(X_{\mathcal{F}},X_{\mathcal{F}},U_{\mathcal{F}},\rTo_{\mathcal{F}},Y_{\mathcal{F}},H_{\mathcal{F}})$
consisting of:
\begin{itemize}%\addtolength{\itemsep}{-0.5\baselineskip}

  \item $X_\mathcal{F}=\pi_X(\mathcal{F})=R$;
  \item $X_{\mathcal{F}0}=X_\mathcal{F}\cap(X_{c0}\times X_{a0})$;
  \item $U_\mathcal{F}=U_c \times U_a$;
  \item $(x_c,x_a)\rTo^{(u_c,u_a)}_{\mathcal{F}}(x_c^\prime,x_a^\prime)$ if the
  following three conditions hold:\begin{enumerate}
                          \item $(x_c,u_c,x_c^\prime)\in\rTo_c$;
                          \item $(x_a,u_a,x_a^\prime)\in\rTo_a$;
                          \item $(x_c,x_a,u_c,u_a)\in\mathcal{F}$;
						\end{enumerate}
  \item $Y_\mathcal{F}=Y_c=Y_a$;
  \item $H_\mathcal{F}(x_c,x_a)=\frac{1}{2}(H(x_c)+H(x_a))$.
\end{itemize}
\end{definition}

We also denote by $S_c\times_\mathcal{F}S_a$ exact feedback compositions
of systems, \ie,~whenever $\mathcal{F}=R^e$ with $R$ an exact ($\varepsilon=0$)
alternating simulation relation.

\section{Time-optimal control and simulation relations}
\label{sec:time_optimal_control}

In this section we provide the main theoretical contribution of this paper
by explaining how approximate simulation relations can be used to relate
time-optimality information.

\subsection{Problem definition}
\label{subsec:problem_def}

To simplify the
presentation, we consider only systems in which $X_a=Y_a$ and $H_a=1_{X_a}$. 
However, all the results in this paper can be easily extended to systems
with $X_a\ne Y_a$ and $H_a\ne 1_{X_a}$ as we explain at the end of Section~\ref{sec:solution}.

\begin{problem}[Reachability]
Let $S_a$ be a system with \mbox{$Y_a=X_a$} and \mbox{$H_a=1_{X_a}$}, and let
$W\subseteq X_a$ be a set of outputs. Let $S_c$ be a controller and $R$ an alternating
simulation relation from $S_c$ to $S_a$.
The pair $(S_c,\mathcal{F})$, with $\mathcal{F}=R^e$, is said to solve the
reachability problem if there exists $x_{0}\in X_{\mathcal{F}0}$ such that for
every maximal behavior $\mathbf{y}\in\mathcal{B}_{x_{0}}(S_c\times_{\mathcal{F}}S_a)
\cup\mathcal{B}_{x_{0}}^{\omega}(S_c\times_{\mathcal{F}}S_a)$, there exists
$k(x_{0})\in\N$ for which $\mathbf{y}(k(x_{0}))=y_{k(x_{0})}\in W$.
\end{problem}
We denote by $\mathcal{R}(S_a, W)$ the set of
controller-interconnection pairs $(S_c,\mathcal{F})$ that solve the reachability
problem for system $S_a$ with the target set $W$ as specification. For brevity,
in what follows we refer to the pairs $(S_c,\mathcal{F})$ simply as
\emph{controller pairs}.

\begin{definition}[Entry time]
\label{Def:EntryTime}
Let $S$ be a system and let $W\subseteq X$ be a subset of outputs. The \emph{entry time} of
$S$ into $W$ from $x_0\in X_0$, denoted by
\mbox{$J(S,W,x_0)$}, is the minimum $k\in\N$ such
that for all maximal behaviors $\mathbf{y}\in\mathcal{B}_{x_0}(S)
\cup\mathcal{B}_{x_0}^{\omega}(S)$, there exists some
$k'\in[0,k]$ for which $\mathbf{y}(k')=y_{k'}\in W$.
\end{definition}
If the set $W$ is
not reachable from state $x_0$ we define
$J(S, W,x_0)=\infty$.
Note that asking in Definition~\ref{Def:EntryTime} for the \emph{minimum} $k$ is needed
because $S$ might be a non-deterministic
system, and thus there might be more than one behavior contained in 
$\mathcal{B}_{x_0}(S) 
\cup\mathcal{B}_{x_0}^{\omega}(S)$ and entering $W$.

If system $S$ is the result of the feedback composition of a system $S_a$ and a
controller $S_c$ with interconnection relation $\mathcal{F}$, \ie,
$S=S_c\times_{\mathcal{F}}S_a$, we denote by $\tilde{J}(S_c,\mathcal{F},S_a,
W,x_{a0})$ the minimum entry time over all possible initial states of the controller related to $x_{a0}$:
$$
\tilde{J}(S_c,\mathcal{F},S_a, W,x_{a0})=\min_{x_{c0}\in X_{c0}}\lbrace
J(S_c\times_{\mathcal{F}}S_a, W,(x_{c0},x_{a0}))\;\big|\;
(x_{c0},x_{a0})\in X_{\mathcal{F}0}\rbrace $$

%\r{***FOOTNOTE with max?***}

The time-optimal control problem asks for the selection of the minimal
entry time behavior for every $x_0\in X_0$ for which $J(S,W,x_0)$ is finite.

\begin{problem}[Time-optimal reachability]
Let $S_a$ be a system with \mbox{$Y_a=X_a$} and $H_a=1_{X_a}$, and let
$ W\subseteq X_a$ be a subset of the set of outputs of $S_a$. The
time-optimal reachability problem asks to find the controller pair \linebreak
\mbox{$(S^*_c,\mathcal{F}^*)\in\mathcal{R}(S_a, W)$} such that for any other
pair $(S_c,\mathcal{F})\in\mathcal{R}(S_a, W)$ the following is
satisfied:
$$
\forall x_{a0} \in X_{a0},\;
\tilde{J}(S_c,\mathcal{F},S_a, W,x_{a0})\geq
\tilde{J}(S^*_c,\mathcal{F}^*,S_a, W,x_{a0}).
$$
\end{problem}

\subsection{Entry time bounds}
\label{subsec:cost_bounds}

The entry time $J$ acts as the cost function we aim at minimizing by designing
an appropriate controller.
The following Lemma, which is quite insightful in itself, explains how the
existence of an approximate alternating simulation relates the minimal entry
times of each system.

\begin{lemma}
\label{lemma:costs_rel_abstract}
Let $S_a$ and $S_b$ be two %\r{non-blocking} 
systems with $Y_a=X_a$, $H_a=1_{X_a}$,
$Y_b=X_b$ and $H_b=1_{X_b}$, and let $W_a\subseteq X_a$ and $W_b\subseteq X_b$
be subsets of states. If the following two conditions are satisfied:
\begin{itemize}%\addtolength{\itemsep}{-0.5\baselineskip}

  \item \mbox{$S_a\preceq^{\varepsilon}_{\mathcal{AS}}S_b$} with the
  relation $R_\varepsilon \subseteq X_a\times X_b$;
  \item $R_\varepsilon(W_a)\subseteq W_b$
\end{itemize}
then the following holds:
$$
(x_{a0},x_{b0})\in R_\varepsilon\implies 
 \tilde{J}(S^*_{ca},\mathcal{F}^*_a,S_a, W_a,x_{a0})\geq
 \tilde{J}(S^*_{cb},\mathcal{F}^*_b,S_b, W_b,x_{b0}) $$
where $(S^*_{ca},{\mathcal{F}^*_a})\in\mathcal{R}(S_a, W_a)$ and
$(S^*_{cb},{\mathcal{F}^*_b})\in\mathcal{R}(S_b, W_b)$ denote the time-optimal
controller pairs for their respective time-optimal control problems, and
\mbox{$x_{a0}\in X_{a0}$}, $x_{b0}\in X_{b0}$.
\end{lemma}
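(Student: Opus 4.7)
The plan is to promote the time-optimal controller pair $(S^*_{ca},\mathcal{F}^*_a)$ for $S_a$ into an admissible controller pair for $S_b$ whose worst-case entry time from $x_{b0}$ matches that of $(S^*_{ca},\mathcal{F}^*_a)$ from $x_{a0}$; since $(S^*_{cb},\mathcal{F}^*_b)$ is optimal for $S_b$, it can only do better, which yields the desired inequality. If $T:=\tilde{J}(S^*_{ca},\mathcal{F}^*_a,S_a,W_a,x_{a0})=\infty$ the statement is vacuous, so I would assume $T$ is finite and pick $x_{c0}$ attaining the minimum in the definition of $\tilde{J}$.

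Next I would construct a controller $S_{cb}$ as a ``product'' that internally simulates both $S^*_{ca}$ and a tracked $S_a$-state related via $R_\varepsilon$ to the current $S_b$-state. Concretely, $S_{cb}$ has state set $X_{\mathcal{F}^*_a}$ (the state set of $S^*_{ca}\times_{\mathcal{F}^*_a}S_a$) and initial state $(x_{c0},x_{a0})$. At a state $(x_c,x_a)$, with current $S_b$-state $x_b$ satisfying $(x_a,x_b)\in R_\varepsilon$, each admissible choice $(u_c,u_a)$ of the original controller is paired with an input $u_b\in U_b(x_b)$ supplied by condition (3) of Definition~\ref{def:ApproxAlternatingSimulation}, so that every $x_b'\in\Post_{u_b}(x_b)$ admits some $x_a'\in\Post_{u_a}(x_a)$ with $(x_a',x_b')\in R_\varepsilon$. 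Choosing such a witness $x_a'$ defines the successor state $(x_c',x_a')$ of $S_{cb}$; the interconnection $\mathcal{F}_b$ is the extended alternating simulation relation that declares this choice compatible.

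The key verification, by induction on the step index, is that every length-$k$ prefix of a maximal behavior of $S_{cb}\times_{\mathcal{F}_b}S_b$ starting from $((x_{c0},x_{a0}),x_{b0})$ projects to a valid behavior of $S^*_{ca}\times_{\mathcal{F}^*_a}S_a$ from $(x_{c0},x_{a0})$, and the tracked $S_a$-state remains $R_\varepsilon$-related to the actual $S_b$-state at every step. Because $(S^*_{ca},\mathcal{F}^*_a)\in\mathcal{R}(S_a,W_a)$ forces the projected $S_a$-behavior to hit $W_a$ within $T$ steps, the hypothesis $R_\varepsilon(W_a)\subseteq W_b$ places the corresponding $S_b$-state into $W_b$ within the same number of steps. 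Hence $(S_{cb},\mathcal{F}_b)\in\mathcal{R}(S_b,W_b)$ with worst-case entry time at most $T$, and the optimality of $(S^*_{cb},\mathcal{F}^*_b)$ gives the inequality.

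The hard part is coping with the universal-existential structure of alternating simulation: I must show that \emph{every} non-deterministic $S_b$-successor $x_b'$ is matched by \emph{some} $S_a$-successor $x_a'$, not just the converse. This is exactly what condition (3) of Definition~\ref{def:ApproxAlternatingSimulation} guarantees, and explains why plain (non-alternating) simulation would not suffice when $S_b$ is non-deterministic. A related subtlety is that $S_{cb}$ observes the resolution of $S_b$'s non-determinism only through the realized successor $x_b'$, so I would need to verify that the witness $x_a'$ used to update the internal state can be selected based on $x_b'$ alone (if necessary by passing through the associated deterministic system of Definition~\ref{def:Staueta_det} for book-keeping), and that the resulting $\mathcal{F}_b$ is indeed an extended alternating simulation relation as demanded by the feedback composition definition.
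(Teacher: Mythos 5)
Your proposal is correct and takes essentially the same route as the paper: both promote the closed loop $S^*_{ca}\times_{\mathcal{F}^*_a}S_a$ to a controller for $S_b$ through the composite of the original interconnection relation with $R_\varepsilon$, track $R_\varepsilon$-related states step by step, use $R_\varepsilon(W_a)\subseteq W_b$ to transfer the entry-time bound, and finish by optimality of $(S^*_{cb},\mathcal{F}^*_b)$. The one step you leave implicit (and the paper makes explicit) is that a maximal \emph{finite} behavior of the new closed loop must project to a \emph{maximal} behavior of $S^*_{ca}\times_{\mathcal{F}^*_a}S_a$ — the reachability guarantee only applies to maximal behaviors — but this does follow from your construction, since condition (3) of the alternating simulation supplies an enabled $u_b$ for every enabled controller move, so the refined loop can only block where the abstract loop already blocks.
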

\begin{proof}

We prove the result by parts.
In the case when\linebreak 
$ \tilde{J}(S^*_{ca},\mathcal{F}^*_a,S_a, W_a,x_{a0})=\infty$, the result is trivially true.
Thus, we analyze the case when
$\tilde{J}(S^*_{ca},\mathcal{F}^*_a,S_a, W_a,x_{a0})<\infty$. In this case,
we show that there exists a controller $S_c$ for $S_b$ such that: 
\begin{equation}
\label{eq:proof_desire}
\tilde{J}(S_c,\mathcal{G},S_b, W_b,x_{b0})\leq
\tilde{J}(S^*_{ca},\mathcal{F}^*_a,S_a, W_a,x_{a0}).
\end{equation}
This is proved by showing
that for every maximal behavior \linebreak
$\mathbf{y^b}\in\mathcal{B}_{(x_{c0},x_{b0})}(S_c\times_\mathcal{G}^{\varepsilon}
S_b) \cup\mathcal{B}_{(x_{c0},x_{b0})}^{\omega}(S_c\times_\mathcal{G}^{\varepsilon} S_b)$ there exists a maximal behavior $\mathbf{y}^a\in\mathcal{B}_{(x_{ca0},x_{a0})}(S^*_{ca}\times_{\mathcal{F}^*_a}S_a)
\cup\mathcal{B}_{(x_{ca0},x_{a0})}^{\omega}(S^*_{ca}\times_{\mathcal{F}^*_a}S_a)$ $\varepsilon$-related to $\mathbf{y}^b$.
The proof is finalized by noting that to be optimal, the controller $(S^*_{cb},\mathcal{F}^*_b)$ has to satisfy:
$$\tilde{J}(S^*_{cb},\mathcal{F}^*_b,S_b, W_b,x_{b0})\leq\tilde{J}(S_c,\mathcal{G},S_b, W_b,x_{b0})\leq
\tilde{J}(S^*_{ca},\mathcal{F}^*_a,S_a, W_a,x_{a0})
$$ for all $x_{a0}\in X_{a0}$ and \mbox{$x_{b0}\in X_{b0}$} such that
$(x_{a0},x_{b0})\in R_\varepsilon$, hence proving the result.

We start defining the controller $S_c$ for  system $S_b$.
Let $R_a$ be the alternating simulation relation defining the interconnection relation
$\mathcal{F}^*_a=R^e_a$. 
We define an interconnection relation
$\mathcal{G}=R^e_G$ that allows us to use the system
\mbox{$S_c=S^*_{ca}\times_{\mathcal{F}^*_a}S_a$} as a controller for system $S_b$. The
interconnection relation $\mathcal{G}=R^e_G$ is determined by the relation: 
$$ R_G=\lbrace((x_{ca},x_a),x_b)\in(X^*_{ca}\times X_a)\times X_b\;
\big|\;(x_{ca},x_a)\in R_a\wedge(x_a,x_b)\in R_{\varepsilon}\rbrace. 
$$
Furthermore, one can easily prove (for a detailed explanation see Proposition~11.8
in~\cite{tabuada-09}) that
\begin{equation}
\label{eq:1proofLemma}
S_c\times_\mathcal{G}^{\varepsilon}
S_b\preceq^{\frac{1}{2}\varepsilon}_{\mathcal{S}}S_c=S^*_{ca}\times_{\mathcal{F}^*_a}S_a,
\end{equation}
with the relation $R_{cb}\subseteq X_{\mathcal{G}}\times X_c$:
$$ 
R_{cb}=\lbrace ((x_c,x_b),x_c')\in X_{\mathcal{G}} \times X_{\mathcal{F}^*_a}\;
\big|\;x_c=x_c' \rbrace. $$

In order to show that for every maximal behavior\linebreak $\mathbf{y^b}\in\mathcal{B}_{(x_{c0},x_{b0})}(S_c\times_\mathcal{G}^{\varepsilon}
S_b) \cup\mathcal{B}_{(x_{c0},x_{b0})}^{\omega}(S_c\times_\mathcal{G}^{\varepsilon} S_b)$ there exists an $\varepsilon$-related
maximal behavior $\mathbf{y}^a\in\mathcal{B}_{(x_{ca0},x_{a0})}(S^*_{ca}\times_{\mathcal{F}^*_a}S_a)
\cup\mathcal{B}_{(x_{ca0},x_{a0})}^{\omega}(S^*_{ca}\times_{\mathcal{F}^*_a}S_a)$, we first make the following remark:
for any pair $(x_{a},x_{b})\in R_\varepsilon$, by the definition of alternating simulation relation, if $U_a(x_{a})\neq\emptyset$ then $U_b(x_{b})\neq\emptyset$. 
From the definition of $\mathcal{G}$ it follows that for all $((x_{ca},x_{a}),x_{b})\in X_{\mathcal{G}}$ the pair $(x_{a},x_{b})$ belongs to $R_\varepsilon$. Thus, for any pair of related states \mbox{$(x_{a},x_{b})\in R_\varepsilon$},
there exists $x_{\mathcal{G}}\in X_{\mathcal{G}}$, namely $(x_{c},x_{b})$, with $x_{c}=(x_{ca},x_{a})$, so that \mbox{$U_c(x_{c})\neq\emptyset\implies U_\mathcal{G}(x_{\mathcal{G}})\neq\emptyset$}. 
The existence of the simulation relation~(\ref{eq:1proofLemma}) implies that for every behavior $\mathbf{y}^b$ there exists an $\varepsilon$-related behavior $\mathbf{y}^a$. Any infinite behavior is a maximal behavior, and thus we already know that for every (maximal) infinite behavior $\mathbf{y}^b$ there exists an $\varepsilon$-related (maximal) infinite behavior $\mathbf{y}^a$. Moreover,
if $\mathbf{y}^b$ is a maximal finite behavior of length $l$, the set of inputs $U_\mathcal{G}(y^b_l)$ is empty. As shown before, this implies that $U_c(y^a_l)=\emptyset$, and thus $\mathbf{y}^a$ is also maximal, where $\mathbf{y}^a$ is the corresponding behavior of $S^*_{ca}\times_{\mathcal{F}^*_a}S_a$ $\varepsilon$-related to $\mathbf{y}^b$.

We now show that~(\ref{eq:proof_desire}) holds.
For any initial state $x_{a0}$ there exists an initial controller state %\linebreak
$x_{ca0}\in R_a^{-1}(x_{a0})$ of $S^*_{ca}$, such that every maximal behavior %\linebreak
\mbox{$\mathbf{y}^a\in\mathcal{B}_{(x_{ca0},x_{a0})}(S^*_{ca}\times_{\mathcal{F}^*_a}S_a)
\cup\mathcal{B}_{(x_{ca0},x_{a0})}^{\omega}(S^*_{ca}\times_{\mathcal{F}^*_a}S_a)$}
reaches a state \mbox{$x_a\in W_a$} \emph{in the worst case} after $\tilde{J}(S^*_{ca},\mathcal{F}^*_a,S_a, W_a,x_{a0})$ steps. We assume in what follows that the controller is initialized at that $x_{ca0}$.
Thus, as maximal behaviors of $S_c\times_\mathcal{G}^{\varepsilon}
S_b$ are related by $R_{cb}$ to maximal behaviors \linebreak  
of $S^*_{ca}\times_{\mathcal{F}^*_a}S_a$, for any \mbox{$x_{b0}\in R_\varepsilon(x_{a0})$} every maximal behavior \linebreak
\mbox{$\mathbf{y}^b\in\mathcal{B}_{(x_{c0},x_{b0})}(S_c\times_\mathcal{G}^{\varepsilon}
S_b) \cup\mathcal{B}_{(x_{c0},x_{b0})}^{\omega}(S_c\times_\mathcal{G}^{\varepsilon} S_b)$}
reaches some state $x_b\in R_{\varepsilon}(W_a)$ in at most
$\tilde{J}(S^*_{ca},\mathcal{F}^*_a,S_a, W_a,x_{a0})$ steps. 
But then, from
the second assumption, $x_b\in R_{\varepsilon}(W_a)$ implies that $x_b\in W_b$ and we
have that
$$\tilde{J}(S_c,\mathcal{G},S_b, W_b,x_{b0})\leq
\tilde{J}(S^*_{ca},\mathcal{F}^*_a,S_a, W_a,x_{a0})$$ for all $x_{a0}\in X_{a0}$ and \mbox{$x_{b0}\in X_{b0}$} such that
$(x_{a0},x_{b0})\in R_\varepsilon$. 

%The existence of the simulation relation~(\ref{eq:1proofLemma}) implies that for every \mbox{$x_{b0}\in R_\varepsilon(x_{a0})$} and for every behavior\linebreak
%$\mathbf{y}^b\in\mathcal{B}_{(x_{c0},x_{b0})}(S_c\times_\mathcal{G}^{\varepsilon}S_b) \cup\mathcal{B}_{(x_{c0},x_{b0})}^{\omega}(S_c\times_\mathcal{G}^{\varepsilon} S_b)$
%there exists a behavior\linebreak
%\mbox{$\mathbf{y}^a\in\mathcal{B}_{(x_{ca0},x_{a0})}(S^*_{ca}\times_{\mathcal{F}^*_a}S_a)
%\cup\mathcal{B}_{(x_{ca0},x_{a0})}^{\omega}(S^*_{ca}\times_{\mathcal{F}^*_a}S_a)$}, such that
%for all $i$,\linebreak $(y^a_i,y^b_i)\in R_\varepsilon$. If $\mathbf{y}^b$ is a finite behavior of length $l$ the set of inputs $U_\mathcal{G}(y^b_l)=\emptyset$, which by the previous discussion implies that $U_c(y^a_l)=\emptyset$. As we consider only the case of $\tilde{J}(S^*_{ca},\mathcal{F}^*_a,S_a, W_a,x_{a0})<\infty$, then $y^a_l\in W_a$, and $y^b_l\in R_{\varepsilon}(W_a)$. If, on the other hand, $\mathbf{y}^b$ is an infinite behavior, then after a number of transitions $m$ no larger than $\tilde{J}(S^*_{ca},\mathcal{F}^*_a,S_a, W_a,x_{a0})$, \mbox{$y^b_m\in R_{\varepsilon}(W_a)$}.

\end{proof}

The second assumption in Lemma~\ref{lemma:costs_rel_abstract} requires the
sets $W_a$ and $W_b$ to be related by $R$. This assumption can always be
satisfied by suitably enlarging or shrinking the target sets.

\begin{definition}
For any relation $R\subseteq X_a\times X_b$ and any set $ W\subseteq X_b$, the sets
$\lfloor W\rfloor_R$,$\lceil W\rceil_R$ are given by:
\begin{eqnarray*}
\lfloor W\rfloor_R&=&\lbrace x_a\in X_a\;\big|\;R(x_a) \subseteq W\rbrace,\\
\lceil W\rceil_R&=&\lbrace x_a\in X_a\;\big|\; R(x_a)\cap W\neq
\emptyset\rbrace.
\end{eqnarray*}
\end{definition}

The main theoretical result in the paper explains how to obtain upper and
lower bounds for the optimal entry times in a system $S_b$ by working with a
related system $S_a$.

\begin{theorem}
\label{cor:cost_bounds}
Let $S_a$ and $S_b$ be two systems with $Y_a=X_a$, $H_a=1_{X_a}$,
$Y_b=X_b$ and $H_b=1_{X_b}$.
If $S_b$ is deterministic and there exists an approximate alternating
simulation relation $R$ from
$S_a$ to $S_b$ such that $R^{-1}$ is an approximate simulation relation from
$S_b$ to $S_a$, \ie: 
$$
S_a\preceq^{\varepsilon}_{\mathcal{AS}}S_b\preceq^{\varepsilon}_{\mathcal{S}}S_a, 
$$ then the following holds for any $W\subseteq X_b$ and $(x_{a0},x_{b0})\in
R$:
\begin{small}
$$\tilde{J}({S}^*_{cd(a)},{\mathcal{F}_d},{S}_{d(a)},\lceil W\rceil_R,
x_{a0})\leq \tilde{J}(S^*_{cb},{\mathcal{F}_b},S_b, W,x_{b0})\leq
\tilde{J}(S^*_{ca},{\mathcal{F}},S_a,\lfloor W\rfloor_R,x_{a0})$$
\end{small}
\!\!where the controller pairs $(S^*_{cb},\mathcal{F}^*_b)\in\mathcal{R}(S_b, W)$, 
$(S^*_{ca},\mathcal{F}^*_a)\in\mathcal{R}(S_a,\lfloor
W\rfloor_R)$ and
$({S}^*_{cd(a)},\mathcal{F}^*_d)\in\mathcal{R}({S}_{d(a)},\lceil
W\rceil_R)$ are optimal for their respective time-optimal control problems.
\end{theorem}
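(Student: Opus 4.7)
The plan is to derive both inequalities from Lemma~\ref{lemma:costs_rel_abstract}, applied twice with different choices of source and target systems and with different target sets.

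For the upper bound I would apply Lemma~\ref{lemma:costs_rel_abstract} directly to the pair $(S_a,S_b)$ with target sets $W_a=\lfloor W\rfloor_R$ and $W_b=W$. The first hypothesis $S_a\preceq^{\varepsilon}_{\mathcal{AS}}S_b$ is given, and the second hypothesis $R(\lfloor W\rfloor_R)\subseteq W$ is immediate from the definition of $\lfloor\cdot\rfloor_R$: every $x_a\in\lfloor W\rfloor_R$ satisfies $R(x_a)\subseteq W$, so $R(\lfloor W\rfloor_R)=\bigcup_{x_a\in\lfloor W\rfloor_R}R(x_a)\subseteq W$. The lemma then yields the right-hand inequality.

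The lower bound requires one auxiliary step: I want to show $S_b\preceq^{\varepsilon}_{\mathcal{AS}}S_{d(a)}$ with the relation $R^{-1}$, so that Lemma~\ref{lemma:costs_rel_abstract} applies to $(S_b,S_{d(a)})$ with targets $W$ and $\lceil W\rceil_R$. Starting from the given $S_b\preceq^{\varepsilon}_{\mathcal{S}}S_a$ via $R^{-1}$, take any $(x_b,x_a)\in R^{-1}$ and any $u_b\in U_b(x_b)$. Determinism of $S_b$ forces $\Post_{u_b}(x_b)$ to be a singleton $\{x_b'\}$, and the simulation property supplies $u_a\in U_a(x_a)$ and $x_a'\in\Post_{u_a}(x_a)$ with $(x_b',x_a')\in R^{-1}$. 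In $S_{d(a)}$ I then pick the input $(u_a,x_a')\in U_{d(a)}$, whose unique successor from $x_a$ is precisely $x_a'$, so the alternating-simulation successor clause is satisfied with no adversarial choice on the target side. Conditions (1) and (2) of alternating simulation are identical to those of simulation and transfer verbatim.

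To finish, note that $R^{-1}(W)=\lceil W\rceil_R$ by a direct unfolding ($x_a\in R^{-1}(W)$ iff some $x_b\in W$ has $(x_a,x_b)\in R$, iff $R(x_a)\cap W\neq\emptyset$), and $(x_{a0},x_{b0})\in R$ is equivalent to $(x_{b0},x_{a0})\in R^{-1}$, so a second application of Lemma~\ref{lemma:costs_rel_abstract} delivers the left-hand inequality. The only nontrivial step is the auxiliary promotion of $S_b\preceq^{\varepsilon}_{\mathcal{S}}S_a$ to $S_b\preceq^{\varepsilon}_{\mathcal{AS}}S_{d(a)}$: two ingredients make it work, namely determinism of $S_b$ (removing adversarial successor choice on the source) and the enriched input alphabet $U_a\times X_a$ of $S_{d(a)}$ (letting the prover commit simultaneously to the input and the successor that the simulation witness provides), which is precisely what alternating simulation into a deterministic target requires.
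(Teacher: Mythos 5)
Your proposal is correct and follows essentially the same route as the paper: two applications of Lemma~\ref{lemma:costs_rel_abstract}, using $R(\lfloor W\rfloor_R)\subseteq W$ for the upper bound and $R^{-1}(W)\subseteq\lceil W\rceil_R$ together with $S_b\preceq^{\varepsilon}_{\mathcal{AS}}S_{d(a)}$ for the lower bound. Your explicit verification that determinism of $S_b$ plus the enriched input alphabet $U_a\times X_a$ promotes the simulation $S_b\preceq^{\varepsilon}_{\mathcal{S}}S_a$ to an alternating simulation into $S_{d(a)}$ is exactly the content of the paper's one-line remark that the two notions coincide for deterministic systems, just spelled out in full.
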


\begin{proof}
Note that
$S_b\preceq^\varepsilon_\mathcal{AS}S_{d(a)}$,
by the assumed relation and both systems being deterministic. Also note that,
by definition, $R(\lfloor W\rfloor_R)\subseteq W$ and \linebreak
\mbox{$R^{-1}(W)\subseteq\lceil W\rceil_R$}. Then the proof follows from
Lemma~\ref{lemma:costs_rel_abstract}.
\end{proof}
\begin{remark}
\label{rem:theorem}
If $S_b$ is not deterministic the inequality 
$$\tilde{J}(S^*_{cb},{\mathcal{F}_b},S_b, W,x_{b0})\leq
\tilde{J}(S^*_{ca},{\mathcal{F}},S_a,\lfloor W\rfloor_R,x_{a0})$$
still holds.
\end{remark}

Theorem~\ref{cor:cost_bounds} explains how upper and lower bounds for the
entry times in $S_b$ can be computed on $S_a$, hence decoupling the optimality
considerations from the specific algorithms used to compute the abstractions.
This possibility is of great value when $S_a$ is a much simpler system than $S_b$. 
We exploit this observation in the next section where $S_b$ denotes a control system
and $S_a$ a much simpler symbolic abstraction.

\section{Approximate time-optimal control}
\label{sec:solution}

Our ultimate objective is to synthesize time-optimal controllers to be
implemented on digital platforms. The appropriate model for this analysis
consists of a time-discretization of a control system.

\begin{definition}
\label{def:Stau}
The system $S_{\tau}(\Sigma)=(X_{\tau},X_{\tau0},U_{\tau},\rTo_{\tau},Y_{\tau},H_{\tau})$
associated with a control system $\Sigma=(\R^n,\mathcal{U},f)$ and with
\mbox{$\tau\in \R^+$} consists of:
\begin{itemize}%\addtolength{\itemsep}{-0.5\baselineskip}

\item $X_\tau=\R^n$;
\item $X_{\tau0}=X_\tau$;
\item $U_\tau=\left\{\upsilon\in\mathcal{U}\,\,\vert\,\, \mathrm{dom}\,
\upsilon=[0,\tau]\right\}$;
\item $x\rTo^\upsilon_{\tau} x'$ if there exist $\upsilon\in U_{\tau}$, 
and a trajectory \mbox{$\xi_{x\upsilon}:[0,\tau]\to \R^n$} of $\Sigma$
satisfying $\xi_{x\upsilon}(\tau)=x'$;
\item $Y_{\tau}=\R^n$;
\item $H_{\tau}=1_{\R^n}$.
\end{itemize}
\end{definition}

A \emph{symbolic abstraction} of a control system is a system
in which its states represent aggregates or collections of states of the original control
system. It has been shown in~\cite{girard09, pola08, zamani10journal}
that one can construct, under mild assumptions, symbolic
abstractions in the form of finite systems $S_{abs}$ satisfying \linebreak
\mbox{$S_{abs}\preceq^\varepsilon_\mathcal{AS}S_\tau(\Sigma)\preceq^\varepsilon_\mathcal{S}S_{abs}$}
with arbitrary precision $\varepsilon$. Since $S_{abs}$
is a finite system, entry times for $S_{abs}$ can be efficiently computed by
using algorithms in the spirit of dynamic programming or Dijkstra's algorithm~\cite{EWD:NumerMath59,cormen_introduction_2001}.
It then follows from Theorem~\ref{cor:cost_bounds} that these entry times
immediately provide bounds for the optimal entry time in $S_\tau(\Sigma)$.
Moreover, the process of computing the optimal entry times
for $S_{abs}$ provides us with a time-optimal controller for $S_{abs}$ that
can be refined to an approximately time-optimal controller for
$S_\tau(\Sigma)$. The refined controller is guaranteed to enforce the bounds
for the optimal entry times in $S_\tau(\Sigma)$, computed in $S_{abs}$.

\subsection{Controller design}

We now present a fixed
point algorithm solving the time-optimal reachability problem for
finite symbolic abstractions $S_{abs}$.
We start by introducing an operator that help us define
the time-optimal controller in a more concise way.

\begin{definition}
For a given system $S_{abs}$ and target set $W\subseteq X_{abs}$, the
operator $G_W:2^{X_{abs}}\to 2^{X_{abs}}$ is defined by:
\begin{footnotesize}
\begin{eqnarray*}
G_W(Z)=\lbrace x_{abs}\in X_{abs}\;|\; x_{abs}\in
W\;\vee\;\;\exists\;u_{abs} \in U_{abs}(x_{abs})\;
\text{s.t.}\;\emptyset\neq\Post_{u_{abs}}(x_{abs})\subseteq Z
\rbrace.
\end{eqnarray*}
\end{footnotesize}
\end{definition}
A set $Z$ is said to be a \emph{fixed point} of $G_W$ if $G_W(Z)=Z$. It is shown in~\cite{tabuada-09} that when $S_{abs}$ is finite, the smallest fixed point $Z$ of $G_W$ exists and can be computed in finitely many steps by iterating $G_W$, \ie, \mbox{$Z=\lim_{i\to\infty}G^i_W(\emptyset)$.} Moreover, the reachability problem admits a solution if the minimal fixed point $Z$ of $G_W$ satisfies $Z\cap X_{abs0}\neq\emptyset$. The time-optimal controller pair can then be constructed from $Z$ as follows:

%A set $Z$ is said to be a \emph{fixed point} of $G_W$ if and only if $G_W(Z)=Z$.
%The reachability problem admits a solution if a minimal fixed point $Z$ of $G_W$ exists and $Z\cap X_{abs0}\neq\emptyset$. The minimal fixed point can be computed through the iteration \mbox{$Z=\lim_{i\to\infty}G^i_W(\emptyset)$}. Note that $S_{abs}$ being finite is sufficient for the fixed point iteration to converge (possibly to the empty set). 
%The time-optimal controller pair can be constructed from $Z$ as established in the following definition:

\begin{definition}[Time-optimal controller pair]
For any finite system\linebreak \mbox{$S_{abs}=(X_{abs},X_{abs0},U_{abs},\rTo_{abs},X_{abs},1_{X_{abs}})$} and for any set $W_a\subseteq X_a$, the time-optimal controller pair 
\mbox{$(S^*_{cabs},\mathcal{F}^*)\in\mathcal{R}(S_{abs},W)$} is given by the system $
S^*_{cabs}=(X_{cabs},X_{cabs0},U_{abs},\rTo_{cabs},X_{cabs},1_{X_{cabs}})
$
and by the interconnection relation $\mathcal{F^*}=R_{cabs}^e$ defined by:
\begin{itemize}%\addtolength{\itemsep}{-0.5\baselineskip}

\item $
R_{cabs}=\lbrace(x_{cabs},x_{abs})\in X_{cabs}\times X_{abs}\;\big|\; x_{cabs}=x_{abs}\rbrace
$
  \item $Z=\lim_{i\to\infty}G^i_W(\emptyset)$;
  \item $X_{cabs}=Z$;
  \item $X_{cabs0}=Z\cap X_{abs0}$;
  \item $x_{cabs}\rTo^{u_{abs}}_{cabs} x_{cabs}'$ if there exists a $k\in\N^+$ such that
  \mbox{$x_{cabs}\notin G^k_W(\emptyset)$} and
  $\emptyset\neq\Post_{u_{abs}}(x_{cabs})\subseteq G^k_W(\emptyset)$,
\end{itemize}
where $\Post_{u_{abs}}(x_{cabs})$ refers to the $u_{abs}$--successors in $S_{abs}$.
\end{definition}
For more details about this controller design we refer the reader to Chapter~6
of~\cite{tabuada-09}.

\subsection{Controller refinement}
\label{ssec:refinement}

The time-optimal controller pair \mbox{$(S^*_{cabs},\mathcal{F}^*)$} 
obtained in the previous section can be easily refined into a controller pair $(S_{c\tau}(\Sigma), \mathcal{F_\tau})$ for $S_\tau(\Sigma)$. Let $R_{abs\tau}$ be the $\varepsilon$-approximate alternating simulation relation from $S_{abs}$ to $S_\tau(\Sigma)$, then the refined controller $(S_{c\tau}(\Sigma), \mathcal{F_\tau})$ is given by the system \linebreak
$
S_{c\tau}=(X_{c\tau},X_{c\tau0},U_\tau,\rTo_{c\tau},X_{c\tau},1_{X_{c\tau}})
$
and by the interconnection relation $\mathcal{F_\tau}=R_{\tau}^e$ defined by: 
\begin{itemize}%\addtolength{\itemsep}{-0.5\baselineskip}

  \item $R_\tau=\lbrace(x_{c\tau},x_\tau)\in X_{c\tau}\times X_\tau\,|\,x_{c\tau}=x_\tau \rbrace$;
  \item $X_{c\tau}=X_\tau$;
  \item $X_{c\tau0}=X_{\tau0}$;
  \item $x_{c\tau}\rTo^{u_{\tau}}_{c\tau} x_{c\tau}'$ if there exists $u_{abs}=u_\tau$, $x_{cabs}\in R_{abs\tau}(x_{c\tau})$ and \linebreak $x_{cabs}'\in R_{abs\tau}(x_{c\tau}')$ such that $x_{cabs}\rTo^{u_{abs}}_{cabs} x_{cabs}'$ ,
\end{itemize}
where we assumed $U_{abs}\subseteq U_\tau$.

Intuitively, the refined controller 
enables all the inputs in $U_{cabs}(x_{abs})$ at every state $x_{\tau}\in X_\tau$ of the system $S_\tau(\Sigma)$ that is related by $R_{abs\tau}$ to the state \mbox{$x_{abs}\in X_{abs}$} of the abstraction $S_{abs}$.  
%\r{disables all the inputs not in $U_{cabs}(x_{abs})$ at every state $x_{\tau}\in X_\tau$ of the system $S_\tau(\Sigma)$ that is related by $R$ to the state $x_{abs}\in X_{abs}$ of the abstraction $S_{abs}$.  
%\s{at a state $x_\tau$ enables all the inputs available in the controller $S_{cabs}$ at every state $x_{abs}$ related to $x_\tau$.}} 
It is important to notice that this controller is non-deterministic, \ie, at a state $x_\tau$ all the inputs in \linebreak  \mbox{$U_{c\tau}(x_{\tau})=\cup_{x_{abs}\in R^{-1}_{abs\tau}(x_\tau)} U_{cabs}(x_{abs})$} are available and they all enforce the cost bounds.

%The alternating simulation relation $R_{\varepsilon}$
%from $S_{abs}$ to $S_\tau(\Sigma)$ enables the refinement of the
%controller $S_{cabs}$ obtained for $S_{abs}$ in the previous section to a controller
%for $S_\tau(\Sigma)$. 
%Such a refinement of $S_{cabs}$ to
%a controller $S_c'=S_{cabs}\times_{\mathcal{F}^*}S_{abs}$ for
%$S_\tau(\Sigma)$ is obtained through the feedback interconnection $\mathcal{G}=R^e_G$
%determined by the relation:
%\begin{small}
%$$R_G=\lbrace((x_{cabs},x_{abs}),x_\tau)\in(X_{cabs}\times
%X_{abs})\times X_\tau\;\big|\;(x_{cabs},x_{abs})\in
%R_{cabs}\wedge(x_{abs},x_\tau)\in R_{\varepsilon}\rbrace.$$
%\end{small}
%The resulting
%closed-loop system from applying this refinement to $S_\tau(\Sigma)$ is then
%$S_c'\times^{\varepsilon}_\mathcal{G}S_\tau(\Sigma)$.

\subsection{Approximate time-optimal synthesis in practice}
\label{SSec:Practice}

The following is a typical sequence of steps to be followed when
applying the presented techniques in practice. 
\begin{enumerate}
  \item \textbf{Select a desired precision $\varepsilon$.} This precision is problem dependent and given by practical margins of error.
  \item \textbf{Construct a symbolic model.} Given $\varepsilon$ construct,
  using your favorite method, a symbolic model $S_{abs}$ satisfying:
  $S_{abs}\preceq^{\varepsilon}_{\mathcal{AS}}S_\tau(\Sigma)\preceq^{\varepsilon}_{\mathcal{S}}S_{abs}$. Such abstractions can be computed using \textsf{Pessoa}~\cite{pessoaURL,pessoaCAV}.
%  \item \r{\textbf{****Enlarge the target set $W$.} Enlarge the target set according
%  to the desired error margins. DISCUSS****}
  \item \textbf{Compute the cost's lower bound.}
  % $\tilde{J}({S}^*_{cd(abs)},{\mathcal{F}^*_d},{S}_{d(abs)},\lceil W\rceil_R, x_{abs0})$} 
  	This bound is obtained as: \linebreak
	\begin{small}
  	$\qquad\tilde{J}({S}^*_{cd(abs)},{\mathcal{F}^*_d},{S}_{d(abs)},\lceil
  	W\rceil_R, x_{abs0})=\min\lbrace
  	k\in\N^+\;\big|\;x_{abs0}\in G^k_{\lceil W\rceil_R}(\emptyset) \rbrace-1$ \end{small}
  	with $G_W$ defined for
  	system $S_{d(abs)}$. This is the best lower bound one can obtain
  	since it follows from Theorem~\ref{lemma:costs_rel_abstract} that by reducing $\varepsilon$ one
  	does not obtain a better lower bound.
  \item \textbf{Compute the cost's upper bound.} 
  This bound is obtained as: \linebreak
  	$\qquad\tilde{J}(S^*_{cabs},{\mathcal{F}^*},S_{abs},\lfloor
  	W\rfloor_R,x_{abs0})=\min\lbrace k\in\N^+\;\big|\; x_{abs0}\in G^k_{\lfloor
  	W\rfloor_R}(\emptyset)\rbrace-1$ with $G_W$ defined for system
  	$S_{abs}$. The controller obtained when computing this bound, 
  	\ie~$S^*_{cabs}$, is the time-optimal controller for $S_{abs}$ and approximately
  	time-optimal for $S_\tau(\Sigma)$ after refinement.
  \item \textbf{Iterate.} If the obtained upper bound is not acceptable,
    refine the symbolic model so that the new model $S_{abs2}$
    satisfies\footnote{The constructions in~\cite{zamani10journal} satisfy this property with 
    $\varepsilon=\eta/2$, $\varepsilon^{\prime}=\eta^{\prime}/2$ and $\varepsilon^{\prime\prime}=\frac{\eta-\eta^\prime}{2}$ by selecting $\eta^{\prime}=\frac{\eta}{\rho}$ with $\rho>1$ an odd number and $\theta=\varepsilon$, $\theta^{\prime}=\varepsilon^\prime$.}:     
    $S_{abs}\preceq^{\varepsilon^{\prime\prime}}_{\mathcal{AS}}$ $S_{abs2}\preceq^{\varepsilon^\prime}_{\mathcal{AS}}S_\tau(\Sigma)$ with $\varepsilon^\prime<\varepsilon$ and $\varepsilon^{\prime\prime}<\varepsilon$.
    In virtue of Theorem~\ref{lemma:costs_rel_abstract} (and Remark~\ref{rem:theorem}) the upper bound 
    will not increase. Moreover, it is our experience that, in general, the upper bound
    will improve by using more accurate symbolic models, \ie, $\varepsilon^\prime<\varepsilon$.
\end{enumerate}

The more general case where $X_\tau\neq Y_\tau$, $H_\tau\neq1_{X_\tau}$ and one is given an output target set $W_Y\subseteq Y$ can be solved in the same manner by using the target set $W\subseteq X$ defined by $W=H^{-1}(W_Y)$.

% ADD SUBSECTION ON COMPLEXITY
\subsection{Generalizations and Complexity}
\label{ssec:complexity}

We briefly discuss in this section some simple generalizations of the proposed methods and the corresponding complexity. We first note that time-optimal synthesis can be combined with safety (qualitative) objectives when the specification is given as the requirement to satisfy both a safety constraint and a reachability requirement. A controller for such specifications can be obtained by first synthesizing the least restrictive controller enforcing the safety constraint and then solving a time-optimal reachability problem. In particular, this approach can be used for specifications given as a Linear Time Logic (LTL) formula of the kind $\phi \wedge \Diamond\, p$, where $p$ is an atomic proposition denoting a set of states and $\phi$ is a formula in the safe-LTL fragment of LTL~\cite{KupfermanV01}.

The general solution of a problem including qualitative and quantitative (time-optimal) specifications consists of five steps: abstraction of the control system; translation of the safe-LTL formula into a deterministic automaton recognizing all the behaviors satisfying the formula; composition of this automaton with the finite abstraction; synthesis of a controller by solving a {\it safety} game in the finite system resulting from the composition; and finally, the synthesis of the final controller as a solution to a time-optimal {\it reachability} game in the abstraction composed with the intermediate (safety) controller.

%While {\it safety} specifications admit maximal solutions, this is not the case for {\it liveness} requirements, {\it e.g.} $\Box\Diamond\,p$. This prevents the use of a two steps solution, as just described, to synthesize controllers for specifications including liveness requirements. Furthermore, liveness requirements imply the satisfaction of a certain property "infinitely often", which is incompatible with solving an inherently "finite horizon" problem as is the time-optimal reachability one.

According to the five steps solution, the (time) complexity of solving these general problems can be split in terms of those steps. The abstraction problem, following the techniques in~\cite{zamani10journal, tabuada-09} can be easily shown to have exponential complexity on the dimension of the control system; the translation of a safe-LTL formula into a deterministic automaton has doubly exponential complexity on the length of the formula~\cite{KupfermanL06}; composition of finite automata is a polynomial problem on the number of states of the composed automata; and, finally, the solution of reachability or safety games on finite automata also takes polynomial time in the number of states. This last step can be shown to be polynomial by noting that both problems admit a solution as the fixed-point of an operator~\cite{Zielonka98, tabuada-09} that needs to be iterated at most as many times as the number of states of the finite automaton.
This brief analysis indicates that the bottleneck, in general, lies on the abstraction process, as the translation of safe-LTL formulas, even though theoretically more complex, tends to be an easier problem due to the short length of the formulas used in practice.

\section{Examples}
\label{sec:bdd_and_example}

To illustrate the provided results and its practical relevance we implemented the
time-optimal controller design algorithm in Section~\ref{sec:solution} in the
publicly available Matlab toolbox named
\textsf{Pessoa}~\cite{pessoaURL, pessoaCAV}. All the run-time values for the examples where obtained on a MacBook with 2.2 GHz Intel Core 2 Duo processor and 4GB of RAM.
%In the following, we briefly present
%the construction of the symbolic models and the data structures used
%to efficiently store these symbolic systems as implemented in the
%\textsf{Pessoa} toolbox. We remind the reader that the
%controllers generated are also symbolic systems, and thus the same data
%structures are used for controllers.
The abstractions generated by \textsf{Pessoa} and used in the following examples, are obtained
by discretizing the dynamics with sample time $\tau$ and the state and input sets with discretization steps $\eta$ and $\mu$ respectively. We refer the readers to~\cite{zamani10journal} where these abstractions are studied in detail. The precision $\varepsilon$ of such abstractions can be adjusted by reducing the discretization parameters $\eta$ and $\mu$.

\subsection{Double integrator}
\label{subsec:example1}

We illustrate the proposed technique on the
classical example of the double integrator, %~\cite{pontryagin-62}, 
where $\Sigma$ is the control system: 
$$
\dot{\xi}(t)=\left[\begin{array}{cc}
                             0&1\\0&0
                             \end{array}\right]\xi(t)+\left[\begin{array}{c}
                                               0\\1
                                               \end{array}\right]\upsilon(t)                                        
$$
and the target set $W$ is the
origin, \ie, $W=\lbrace (0,0)\rbrace$. 

Following the steps presented in Section~\ref{sec:solution},
first we select a precision $\varepsilon$ which in this example we take as $\varepsilon=0.15$. Next,
we relax the problem by enlarging the target set to $W=\mathbf{B}_{1}((0,0))$.
We select as parameters for the symbolic abstraction $\tau=1$, $\mu=0.1$ and
$\eta=0.3$. Restricting the state set to
$X=\mathbf{B}_{30}((0,0))\subset\R^2$ the state set of $S_\tau(\Sigma)$ becomes finite
and the proposed algorithms can be applied. Constructing the abstraction $S_\tau(\Sigma)$ in
\textsf{Pessoa} took less than $5$ minutes
and the resulting model required $7.9$ MB to be stored. The lower bound
required about $50$ milliseconds while computing the time-optimal controller
required only $3$ seconds and the controller was stored in $1$ MB.
 
The approximately time-optimal controller $S^*_c$ is depicted in Figure~\ref{fig:symb_controller}. We remind the reader that the obtained controller is non-deterministic. Hence, Figure~\ref{fig:symb_controller} shows \emph{one} of the valid inputs of the time-optimal controller at different locations of the state-space. The optimal controller to the origin is also shown in Figure~\ref{fig:symb_controller} represented by the switching curve (thick blue line) dividing the state space into regions where the inputs $u=1$ (below the switching curve) and $u=-1$ (above the switching curve) are to be used.
As expected, the partition produced by this switching curve does not coincide with the one found by our toolbox, as the time-optimal controller reported in~\cite{pontryagin-62} is not
time-optimal to reach the set $W$ (it is just optimal when the target set is the singleton $\lbrace
(0,0)\rbrace$).

Although the computed bounds are conservative, the cost
achieved with the symbolic controller is quite close to the true optimal cost as illustrated in Figure~\ref{fig:relativetimes} and Table~\ref{tab:table1}.
This is a consequence of the bounds relying entirely on the worst case scenarios induced by the non-determinism of the computed abstractions. In practice, the symbolic controller determines the actual state of the system every time it acquires a state measurement thus resolving the nondeterminism present in the abstraction. In Figure~\ref{fig:relativetimes} we present the ratio between the cost to reach $W$, obtained from the symbolic controller, and the time-optimal controller. The time-optimal controller to reach the origin operates in continuous time and thus for some regions of the state-space the cost obtained will be smaller than one unit of time. On the other hand, the approximate time-optimal controller obtained with our techniques cannot obtain costs smaller than one unit of time, as it operates in discrete time.
Hence, to make the comparison fair, in Figure~\ref{fig:relativetimes} the  costs achieved by the time-optimal controller smaller than one unit of time were saturated to a cost of $1$ time unit.
In Table~\ref{tab:table1} specific values of the time to
reach the target set $W$ using the constructed controller are compared to the
cost of reaching $W$ with the true time-optimal controller to reach the origin.

%\begin{figure}[h]
%\begin{center}
%\includegraphics[width=1\hsize]{bounds}
%   \caption{Upper bound
%   $\tilde{J}(S^*_c,{\mathcal{F}^*},S_{\tau\eta\mu},\lfloor
%   W\rfloor_\eta,x_{\eta 0})$ (left) and lower bound
%   $\tilde{J}({S}^*_{d(c)},{\mathcal{F}^*_d},{S}_{d(\tau\eta)},\lceil
%   W\rceil_\eta, x_{\eta0})$ (right).}
%   \label{fig:bounds}
%\end{center}
%\end{figure}

\begin{figure}[ht]
\centering
\subfigure[]{
\includegraphics[width=0.46\hsize]{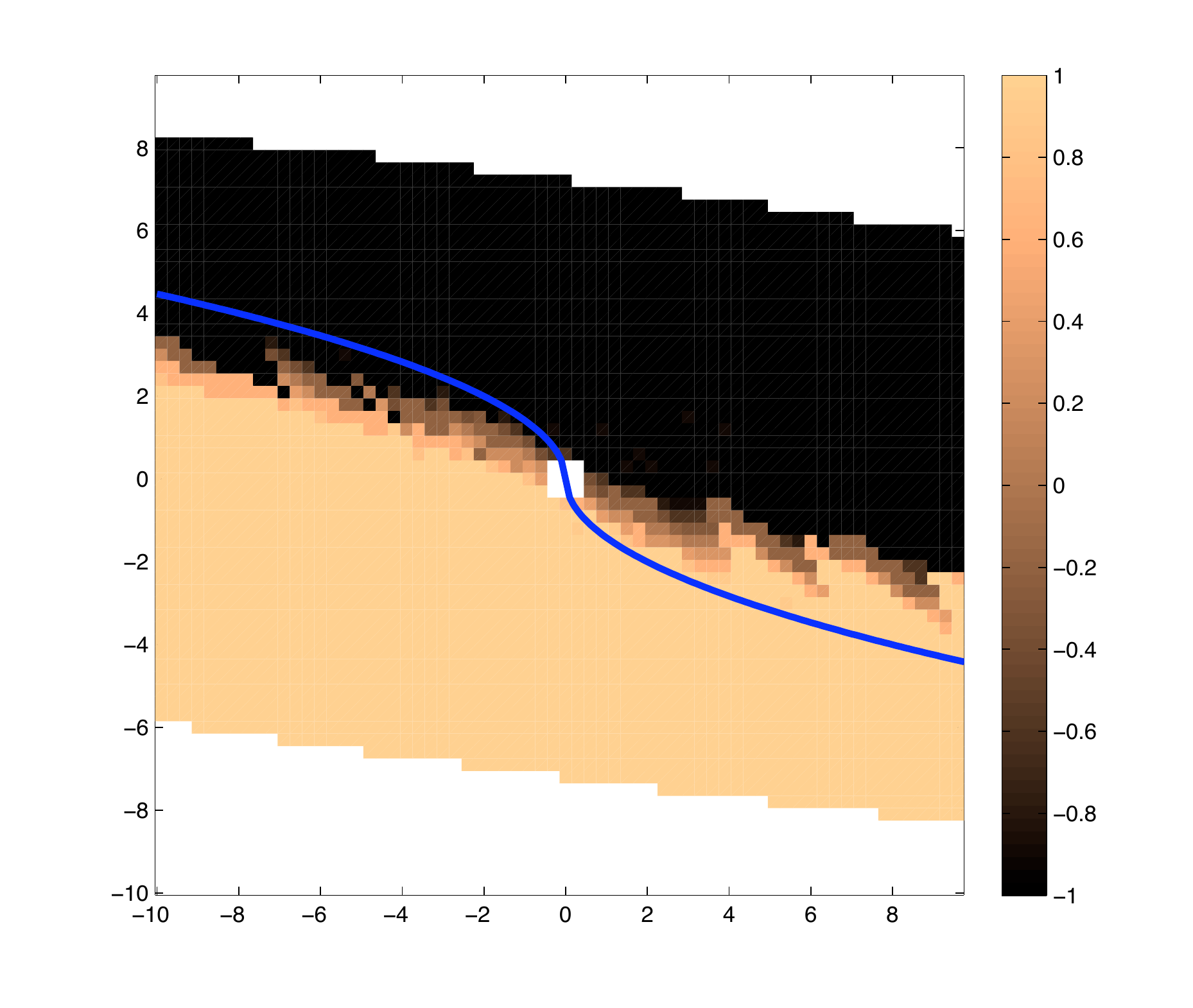}
\label{fig:symb_controller}
}
\subfigure[]{
\includegraphics[width=0.48\hsize]{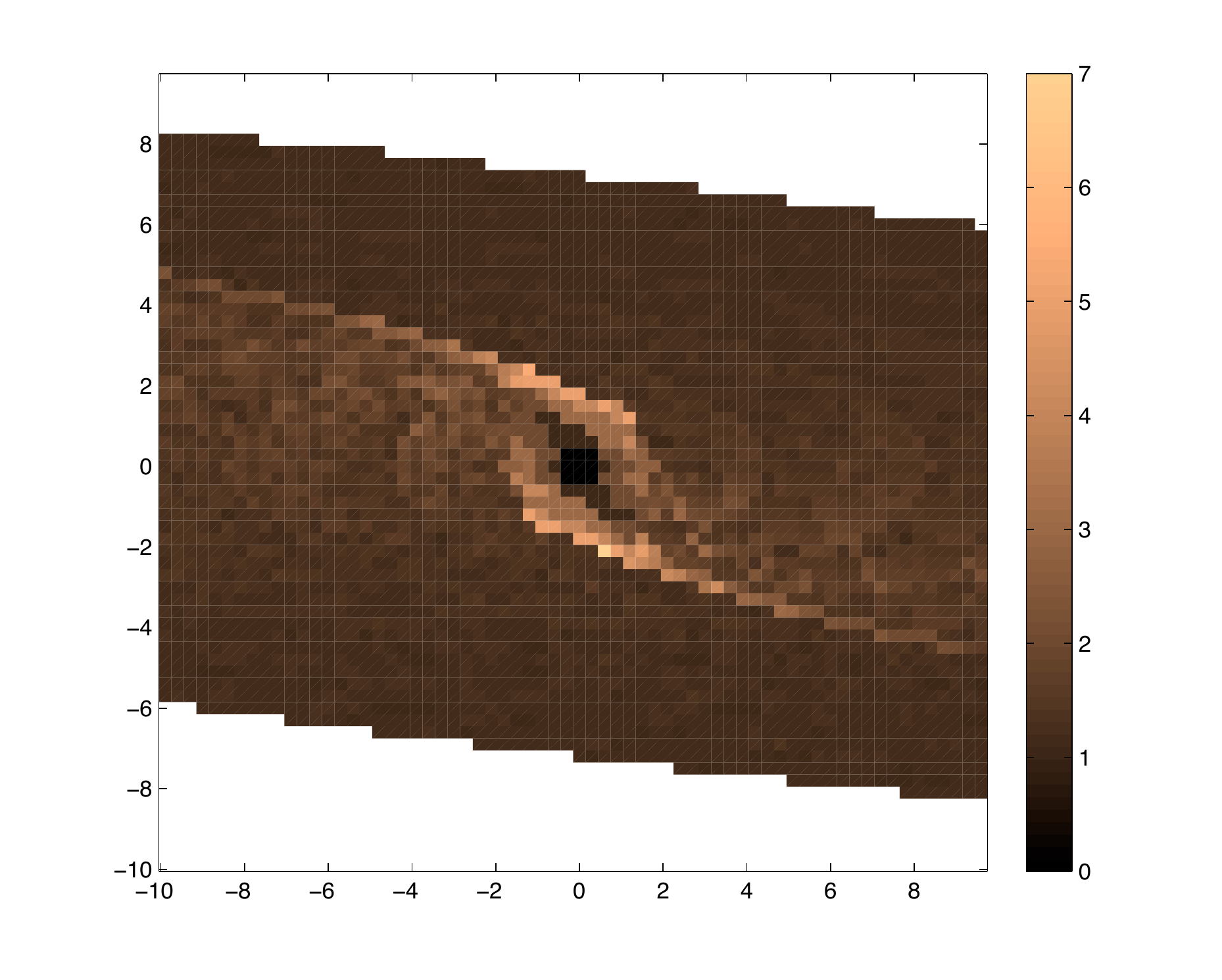}
\label{fig:relativetimes}
}
\caption{(a) Symbolic controller $S^*_c$. (b) Time to reach the target set $W$ represented as the ratio between the times obtained from the symbolic controller and the times obtained from the continuous time-optimal controller to reach the origin. }
\label{fig:fig1}
\end{figure}

%\begin{figure}[h]
%\begin{center}
%\includegraphics[width=0.7\hsize]{controller}
%   \caption{Symbolic controller $S^*_c$.}
%   \label{fig:symb_controller}
%\end{center}
%\end{figure}

%\begin{figure}[h]
%\begin{center}
%\includegraphics[width=0.7\hsize]{relative_times}
%   \caption{Time to reach the target set $W$ represented as the ratio between the times obtained from the symbolic controller and the times obtained from the continuous time-optimal controller to reach the origin.}
%   \label{fig:relativetimes}
%\end{center}
%\end{figure}

%\begin{table}
%\begin{footnotesize}
%\begin{center}
%\begin{tabular}{ l c c c c}
%  Controller & $x_0=(-6.1,6.1)$ & $(-6,6)$ & $(-5.85,5.85)$ 
%  \\
%\hline
%\hline
% $ Continuous$	& $12.83$ s & $12.66$ s & $11.60$ s\\                  
% $ Symbolic $	& $14$ s & $14$ s & $13$ s\\
% $ Upper Bound$	& $29$ s & $29$ s & $29$ s\\
% $ Lower Bound$	& $9$ s & $9$ s & $9$ s\\
% \hline
% Controller & $x_0=(3.1,0.1)$ & $(3,0)$ & $(2.85,-0.1)$\\
% \hline
% \hline
% $ Continuous$	& $2.66$ s & $2.53$ s & $2.38$ s\\                  
% $ Symbolic $	& $3$ s & $3$ s & $3$ s\\
% $ Upper Bound$	& $7$ s & $7$ s & $7$ s\\
% $ Lower Bound$	& $2$ s & $2$ s & $2$ s
%\end{tabular}
%\end{center}
%\end{footnotesize}
%\caption{Times achieved in simulations by a
%time-optimal controller to reach the origin and the symbolic controller. \label{tab:table1}}
%\end{table}

\begin{table}
\begin{footnotesize}
\begin{center}
\begin{tabular}{ l c c c c c c c c}
Initial State & $(-6.1,6.1)$ & $(-6,6)$ & $(-5.85,5.85)$ & $(3.1,0.1)$ & $(3,0)$ & $(2.85,-0.1)$\\
%Controller & &  &  &  &  & \\ 
\hline
\hline
 $ Continuous$	& $12.83$ s & $12.66$ s & $11.60$ s & $2.66$ s & $2.53$ s & $2.38$ s\\                  
 $ Symbolic $	& $14$ s & $14$ s & $13$ s & $3$ s & $3$ s & $3$ s\\
 $ Upper Bound$	& $29$ s & $29$ s & $29$ s & $7$ s & $7$ s & $7$ s\\
 $ Lower Bound$	& $9$ s & $9$ s & $9$ s & $2$ s & $2$ s & $2$ s
\end{tabular}
\end{center}
\end{footnotesize}
\caption{Times achieved in simulations by a
time-optimal controller to reach the origin and the symbolic controller. \label{tab:table1}}
\end{table}

\subsection{Unicycle example}
\label{subsec:example2}
With this example we want to persuade the reader of the potential of
the presented techniques to solve control problems with both qualitative and
quantitative specifications. The problem we consider now is to
drive a unicycle through a given environment with obstacles. In this example
both qualitative and quantitative specifications are provided. The avoidance
of obstacles prescribes conditions that the trajectories should respect, thus
establishing qualitative requirements of the desired trajectories.
Simultaneously, a time-optimal control problem is specified by requiring
the target set to be reached in minimum time, thus defining the quantitative requirements.
Hence, the complete specification requires the synthesis of a controller
disabling trajectories that hit the obstacles, and selecting, among the
remaining trajectories, those with the minimum time-cost associated to them.

We consider the following model for the unicycle control system:
\begin{equation*}
\dot{x}=v cos(\theta),\;\dot{y}=v sin(\theta),\;\dot{\theta}=\omega
\end{equation*}
%\begin{eqnarray*}
%\dot{x}&=&v cos(\theta)\\
%\dot{y}&=&v sin(\theta)\\
%\dot{\theta}&=&\omega
%\end{eqnarray*}
in which $(x,y)$ denotes the position coordinates of the vehicle, $\theta$
denotes its orientation, and $(v,\omega)$ are the control inputs, linear
velocity and angular velocity respectively.
%Following the construction presented in
%Section~\ref{subsec:symbolic_models}, a symbolic model for this system is
%obtained. 
The parameters used in the construction of the symbolic model are:
\mbox{$\eta=0.2$, $\mu=0.1$, $\tau=0.5$ seconds}, and $v\in[
0,\,0.5]$ and $\omega\in[ -0.5,\,0.5]$. The
problem to be solved is to find a feedback controller optimally navigating the
unicycle from any initial position to the target set
$W=[4.6,5]\times[1,1.6]\times[-\pi,\pi]$, indicated with a red box in
Figure~\ref{fig:trajectory} (with any orientation $\theta$), while avoiding the
obstacles in the environment, indicated as blue boxes in Figure~\ref{fig:trajectory}.
%\begin{figure}[h]
%\begin{center}
%\includegraphics[width=0.6\hsize]{Robot_setup}
%   \caption{Unicycle environment and target set. Obstacles are indicated as
%   blue boxes, and the target set as a red box.}
%   \label{fig:rob_setup}
%\end{center}
%\end{figure}
The symbolic model was constructed in $179$ seconds and used $11.5$ MB of storage, and the approximately
time-optimal controller was obtained in $5$ seconds and required $3.5$ MB of storage. In
Figure~\ref{fig:trajectory} we present the result of applying the approximately
time-optimal controller with the prescribed qualitative requirements (obstacle
avoidance). The (approximately) bang-bang nature of the obtained controller can be appreciated in the right plot of this figure. For the initial condition $(1.5,1,0)$ the solution obtained,
presented in Figure~\ref{fig:trajectory}, required $44$ seconds to reach the
target set. 

\begin{figure}
\begin{center}
\includegraphics[width=0.36\hsize]{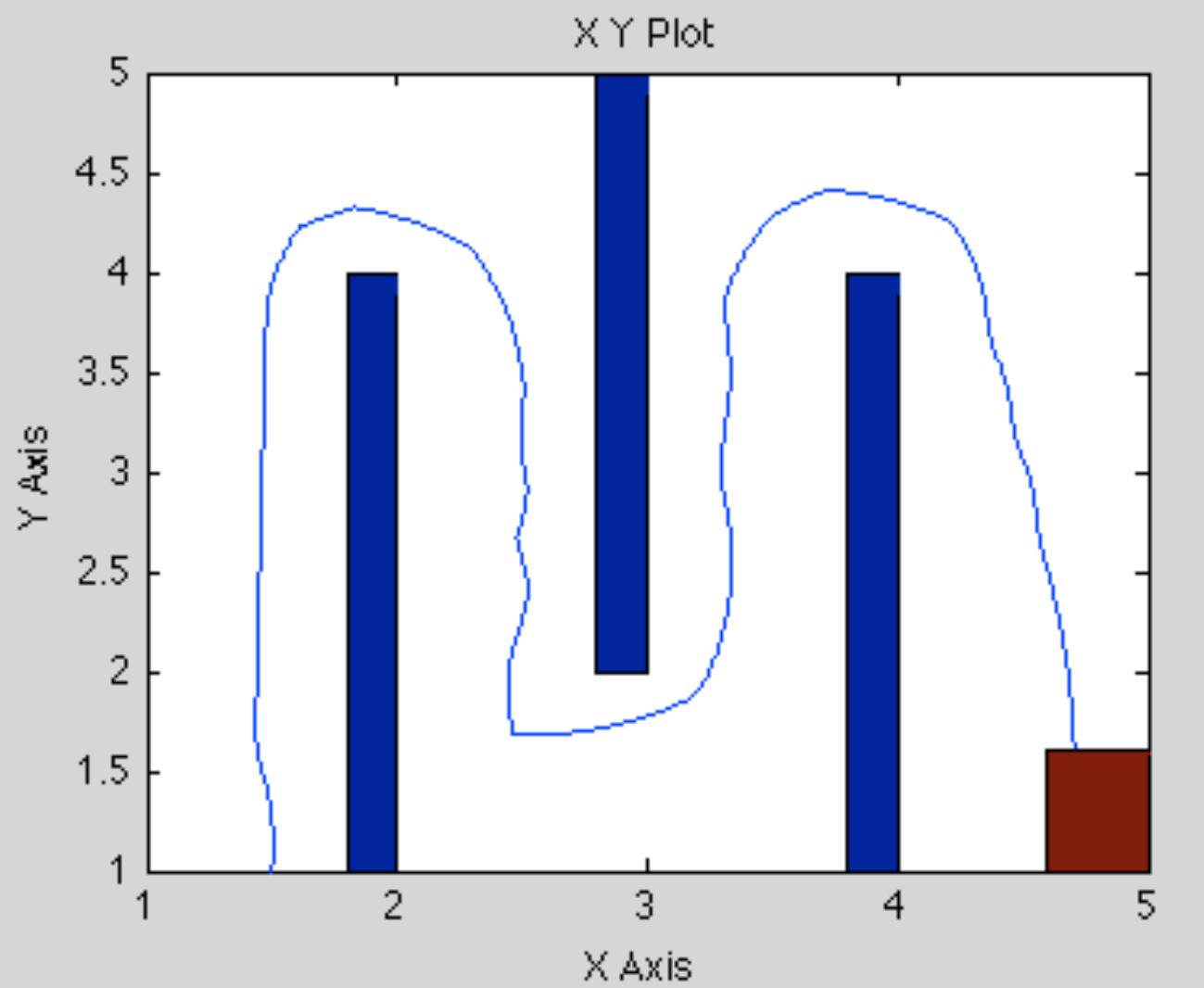}
\includegraphics[width=0.39\hsize]{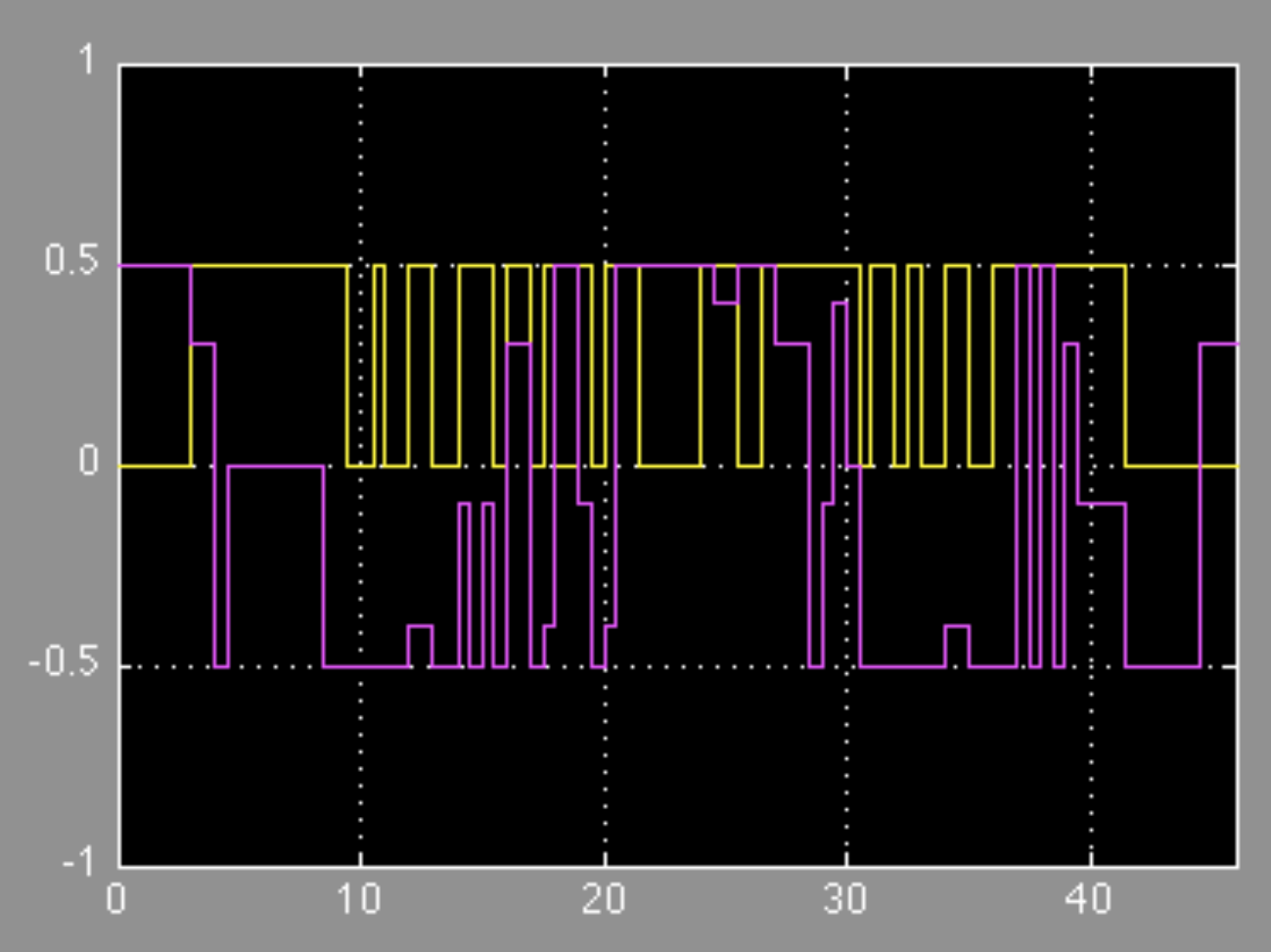}
   \caption{Unicycle trajectory under the automatically generated
   approximately time-optimal feedback controller (left figure) and the inputs employed: $v$ in yellow and $\omega$ in pink (right figure).}
   \label{fig:trajectory}
\end{center}
\end{figure}

\section{Discussion}
\label{sec:discussion}

We have proposed a computational approach to solve time-optimal control
problems by resorting to symbolic abstractions. The obtained solutions provide explicit lower
and upper bounds on the achievable cost. The employed techniques allow us to
solve complex time-optimal control problems, with target sets, state sets and
dynamics of very general nature.

The main theoretical result shows that symbolic abstractions which approximately alternatingly simulate a control system provide bounds for the achievable cost of time-optimal control problems. An 
% Bellman-Ford type 
algorithm has been provided to obtain these cost bounds by solving corresponding optimal control problems over the symbolic abstraction. Furthermore, this algorithm produces an approximately time-optimal symbolic controller that can be easily refined into a controller for the original system, as shown in Section~\ref{ssec:refinement}.
On the practical side, we have implemented the presented algorithms
in the \textsf{Pessoa} toolbox resorting to binary decision diagrams as the underlying
data structures. We have also illustrated the techniques using \textsf{Pessoa} on two
examples, the last of which illustrates how symbolic models can be used to
solve problems with both qualitative and quantitative requirements.

Future work will concentrate in the development of synthesis algorithms for combinations
of general qualitative and quantitative specifications for control systems.

\section{Acknowledgements}

The authors would like to thank Giordano Pola for the fruitful discussions in
the beginning of this project. We also acknowledge
Anna Davitian for her help in the development of \textsf{Pessoa}.

\bibliographystyle{elsarticle-num}
\bibliography{journaloptimal}

\begin{thebibliography}{10}
\expandafter\ifx\csname url\endcsname\relax
  \def\url#1{\texttt{#1}}\fi
\expandafter\ifx\csname urlprefix\endcsname\relax\def\urlprefix{URL }\fi
\expandafter\ifx\csname href\endcsname\relax
  \def\href#1#2{#2} \def\path#1{#1}\fi

\bibitem{pessoaURL}
M.~Mazo~Jr., A.~Davitian, P.~Tabuada,
  \href{http://www.cyphylab.ee.ucla.edu/pessoa}{Pessoa website.} (2009).
\newline\urlprefix\url{http://www.cyphylab.ee.ucla.edu/pessoa}

\bibitem{girard09}
A.~Girard, G.~J. Pappas, Hierarchical control system design using approximate
  simulation., Automatica 45~(2) (2009) 566--571.

\bibitem{pola08}
G.~Pola, A.~Girard, P.~Tabuada, Approximately bisimilar symbolic models for
  nonlinear control systems., Automatica 44~(10) (2008) 2508--2516.

\bibitem{symbolicTAC06}
M.~B. Egerstedt, E.~Frazzoli, P.~G. J., Special section on symbolic methods for
  complex control systems, IEEE Transactions on Automatic Control 51~(6) (2006)
  921--923.

\bibitem{canny_complexity_1988}
J.~F. Canny, The complexity of robot motion planning, Ph.D. thesis, {MIT} Press
  (1988).

\bibitem{pontryagin59}
L.~S. Pontryagin, Optimal regulation processes {(in Russian)}, Uspehi Mat. Nauk
  14~(1 (85)) (1959) 3--20.

\bibitem{bellman52}
R.~Bellman, {The theory of dynamic programming}, Proceedings of the National
  Academy of Sciences of the United States of America 38~(8) (1952) 716--719.

\bibitem{grune09}
L.~Gr\"une, O.~Junge, Set oriented construction of globally optimal
  controllers, at - Automatisierungstechnik 57~(6) (2009) 287--295.

\bibitem{broucke05}
M.~Broucke, M.~Domenica Di~Benedetto, S.~D. Gennaro,
  A.~Sangiovanni-Vincentelli, Efficient solution of optimal control problems
  using hybrid systems, SIAM Journal on Control and Optimization 43~(6) (2005)
  1923--1952.

\bibitem{karaman08}
S.~Karaman, R.~G. Sanfelice, E.~Frazzoli, Optimal control of mixed logical
  dynamical systems with linear temporal logic specifications, in: Proceedings
  of the 47th IEEE Conference on Decision and Control, 2008, pp. 2117--2122.

\bibitem{bemporad06}
A.~Bemporad, N.~Giorgetti, Logic-based methods for optimal control of hybrid
  systems, IEEE Transactions on Automatic Control 51~(6) (2006) 963--976.

\bibitem{pessoaCAV}
M.~Mazo~Jr., A.~Davitian, P.~Tabuada, Pessoa: A tool for embedded controller
  synthesis., in: T.~Touili, B.~Cook, P.~Jackson (Eds.), CAV, Vol. 6174 of
  Lecture Notes in Computer Science, Springer, 2010, pp. 566--569.

\bibitem{tabuada-09}
P.~Tabuada, Verification and Control of Hybrid Systems: A Symbolic Approach,
  Springer US, 2009.

\bibitem{zamani10journal}
M.~Zamani, G.~Pola, M.~Mazo~Jr., P.~Tabuada,
  \href{http://www.cyphylab.ee.ucla.edu/Home/publications}{Symbolic models for
  nonlinear control systems without stability assumptions.}, Submitted.
\newline\urlprefix\url{http://www.cyphylab.ee.ucla.edu/Home/publications}

\bibitem{bloem09}
R.~Bloem, K.~Chatterjee, T.~A. Henzinger, B.~Jobstmann, Better quality in
  synthesis through quantitative objectives, in: Proceedings of the 21st
  International Conference on Computer-Aided Verification, no. 5643 in Lecture
  Notes in Computer Science, Springer, 2009, pp. 140--156.

\bibitem{Wegener00branchingprograms}
I.~Wegener, Branching Programs and Binary Decision Diagrams - Theory and
  Applications, SIAM Monographs on Discrete Mathematics and Applications, 2000.

\bibitem{bloem07}
R.~Bloem, S.~Galler, B.~Jobstmann, N.~Piterman, A.~Pnueli, M.~Weiglhofer,
  {Specify, Compile, Run: Hardware from PSL}, Electronic Notes in Theoretical
  Computer Science 190~(4) (2007) 3--16.

\bibitem{sangiovanni99}
F.~Balarin, M.~Chiodo, P.~Giusto, H.~Hsieh, A.~Jurecska, L.~Lavagno,
  A.~Sangiovanni-Vincentelli, E.~M. Sentovich, K.~Suzuki, Synthesis of software
  programs for embedded control applications, IEEE Transactions on
  Computer-Aided Design of Integrated Circuits and Systems 18~(6) (1999)
  834--849.

\bibitem{EWD:NumerMath59}
E.~W. Dijkstra, A note on two problems in connexion with graphs., Numerische
  Mathematik 1 (1959) 269--271.

\bibitem{cormen_introduction_2001}
T.~H. Cormen, C.~E. Leiserson, R.~L. Rivest, C.~Stein, Introduction to
  Algorithms, 2nd Edition, {MIT} Press, Cambridge, {MA}, 2001.

\bibitem{KupfermanV01}
O.~Kupferman, M.~Y. Vardi, Model checking of safety properties., Formal Methods
  in System Design 19~(3) (2001) 291--314.

\bibitem{KupfermanL06}
O.~Kupferman, R.~Lampert, On the construction of fine automata for safety
  properties., in: S.~Graf, W.~Zhang (Eds.), ATVA, Vol. 4218 of Lecture Notes
  in Computer Science, Springer, 2006, pp. 110--124.

\bibitem{Zielonka98}
W.~Zielonka, Infinite games on finitely coloured graphs with applications to
  automata on infinite trees., Theor. Comput. Sci. 200~(1-2) (1998) 135--183.

\bibitem{pontryagin-62}
L.~S. Pontryagin, V.~G. Boltyanskii, R.~V. Gamkrelidze, E.~Mishchenko, The
  mathematical theory of optimal processes (International series of monographs
  in pure and applied mathematics), Interscience Publishers, 1962.

\end{thebibliography}

\end{document}